\theoremstyle{remark}
\theoremstyle{plain}
\newcounter{theoremintro} 
\newtheorem{introtheorem}[theoremintro]{Theorem}
\newtheorem{introcorollary}[theoremintro]{Corollary}
\newtheorem*{definition*}{Definition} 
\newtheorem*{theorem*}{Theorem} 
\newtheorem*{lemma*}{Lemma}
\newtheorem*{corollary*}{Corollary}
\newtheorem{theorem}[subsection]{Theorem} 
\newtheorem{lemma}[subsection]{Lemma}
\newtheorem{corollary}[subsection]{Corollary}
\newtheorem{proposition}[subsection]{Proposition}
\theoremstyle{definition}
\newtheorem{definition}[subsection]{Definition}
\theoremstyle{remark}
\newtheorem{remark}[subsection]{Remark}
\numberwithin{equation}{section}
\newcommand{\GL}{\mathrm{GL}}
\newcommand{\N}{\mathbb{N}}
\newcommand{\Z}{\mathbb{Z}} 
\newcommand{\Q}{\mathbb{Q}}
\newcommand{\C}{\mathbb{C}}
\newcommand{\bZ}{\mathbb{Z}}
\newcommand{\bQ}{\mathbb{Q}}
\newcommand{\fm}{\mathfrak{m}}
\newcommand{\fn}{\mathfrak{n}}
\newcommand{\class}{{\mathrm{class}}}
\DeclareMathOperator{\End}{End}
\DeclareMathOperator{\id}{id}
\begin{document} 
\title[Bernoulli shifts with approximately inner flip]{Equivariant $KK$-theory of Bernoulli shifts on $C^*$-algebras with approximately inner flip}

		\author[J. Kranz]{Julian Kranz}
	\address{J.Kranz: Chair of Data Science,		
		Institut für Wirtschaftsinformatik,		
		Leonardo Campus 3, 48149 M\"unster, Germany.}
	\email{julian.kranz@uni-muenster.de}
	\urladdr{https://sites.google.com/view/juliankranz/}

\author[S. Nishikawa]{Shintaro Nishikawa} 
\address{S. Nishikawa: School of Mathematical Sciences, University of Southampton, University Road, Southampton, SO17 1BJ, UK}
\email{s.nishikawa@soton.ac.uk}
\urladdr{https://sites.google.com/view/snishikawa/}


\subjclass[2020]{Primary 46L80, 19K35; Secondary  20C05.}

\keywords{Bernoulli shift, UHF-algebra, $KK$-theory, approximately inner flip}

\maketitle

\begin{abstract}
Building on Enders--Schemeitat--Tikuisis' classification, we show that a separable $C^*$-algebra $A$ with approximately inner flip in the UCT class is $K$-theoretically self-absorbing if and only if for every finite group $G$, the Bernoulli shift on $A^{\otimes G}$ is $KK^G$-equivalent to the trivial action.
This in particular applies to UHF-algebras of infinite type and computes the $K$-theory of the associated crossed product. 
Along the way, we obtain an alternative proof of Hirshberg--Winter's result that the Bernoulli shift of $G$ on a UHF-algebra of infinite type absorbs the trivial action up to conjugacy. 
For more general amenable groups $G$, we develop $K$-theory formulas for Bernoulli shifts on UHF-absorbing $C^*$-algebras, and establish $KK^G$-triviality for Bernoulli shifts on strongly self-absorbing $C^*$-algebras satisfying the UCT. 
%
%
%
\end{abstract}

\tableofcontents
\section{Introduction}
In topological dynamics, a very fertile class of examples is given by Bernoulli shifts, that is, by the shift action of a group $G$ on the product $X^G\coloneqq \prod_G X$ of $G$-many copies of a given compact space $X$. 
When the space $X$ is moreover totally disconnected, the $K$-theory of the crossed product $C\left(X^G\right)\rtimes_r G$ can be computed in many cases \cite{CEL}. 
These computations and the techniques appearing in them are not only of intrinsic interest, but they make possible the computation of the $K$-theory of $C^*$-algebras associated to large classes of (inverse) semigroups, wreath products, and many more examples \cite{CEL,XinLi,XinLiPartial}.

The non-commutative version of the Bernoulli shift is the shift action of a group $G$ on the tensor product $A^{\otimes G}\coloneqq \bigotimes_{g\in G} A$ for a given unital $C^*$-algebra $A$. These \emph{non-commutative Bernoulli shifts} have a long history in operator algebras originating from non-commutative entropy and the classification of group actions \cite{CS,Voiculescu1995,Popa,Szabo2019,GardellaLupini}.

The simplest non-commutative analogue of a totally disconnected space is a \emph{UHF-algebra}, that is, a (possibly infinite) tensor product of matrix algebras $M_\mathfrak n=\bigotimes_{p} M_p^{\otimes \mathfrak n_p}$ for a \emph{supernatural number} $\mathfrak n = \prod_p p^{n_p}$ with $n_p\in \N\cup \{\infty\}$ for all primes $p$. 
A key feature of UHF algebras is that they have \emph{approximately inner flip} \cite{ER} in the sense that the flip map \[\sigma_{A,A}\colon A\otimes A\to A\otimes A,\quad a\otimes b \mapsto b\otimes a\]
is a point-norm limit of inner automorphisms. 

Our main result computes the $K$-theory of the associated crossed product in the case that $G$ is finite.
To state it, we call a supernatural number $\mathfrak n$ as above of \emph{infinite type} if $n_p\in \{0,\infty\}$ for all $p$. 
For any supernatural number $\mathfrak n$, we write
\[\Q_\mathfrak n \coloneqq \left\{\frac a b  \,\middle\vert\, a\in \Z, b\in \N \text{ divides } \mathfrak n\right\}\subset\Q.\]
\begin{introtheorem}[Theorem \ref{thm_GKK}] \label{thmD} Let $A$ be a separable $C^*$-algebra satisfying the UCT \cite{Rosenberg1987}. The following are equivalent:
	\begin{enumerate}
		\item\label{item-classif-intro} $A$ is $KK$-equivalent to a unital, simple, separable, nuclear, $\mathcal Z$-stable $C^*$-algebra $A$ with approximately inner flip such that $A\otimes A\cong A$;
		\item\label{item-K-absorbing-intro} The flip map $\sigma_{A,A}$ is equal to the identity in $KK(A\otimes A,A\otimes A)$ and we have an isomorphism $K_*(A)\cong K_*(A\otimes A)$; 
		\item\label{item-C2-trivial-intro} The flip action on $A^{\otimes C_2}$ is $KK^{C_2}$-equivalent to the trivial action on $A\otimes A$;
		\item\label{item-G-trivial-intro} For any finite group $G$, and for any finite $G$-set $Z$, $A^{\otimes Z}$ equipped with the Bernoulli shift $G$-action is $KK^G$-equivalent to $A$ equipped with the trivial $G$-action;
		\item\label{item-K-theory-intro} As a graded abelian group, $K_0(A)\oplus K_1(A)$ is isomorphic to either $0\oplus \Q_\fm/\Z$ or $\Q_\fn\oplus \Q_\fm/\Z$ for supernatural numbers $\fm, \fn$ of infinite type such that $\fm$ divides $\fn$. 
	\end{enumerate}
	In particular, for any $G$ and $Z$ as in \eqref{item-G-trivial-intro}, we have an isomorphism of $R_\C(G)$-modules
	\[K_*(A^{\otimes Z}\rtimes_r G)\cong K_*(A)\otimes_\Z R_\C(G),\]
	where $R_\C(G)\coloneqq K_0(C^*(G))\cong \Z[\hat G]$ denotes the complex representation ring. 
\end{introtheorem}
Our proof of Theorem \ref{thmD} builds on Enders--Schemeitat--Tikuisis' classification \cite{Tikuisis,TikuisisCorrigendum} of $C^*$-algebras with approximately inner flip satisfying the assumptions of the Elliott classification programme\footnote{We refer to \cite{WinterICM,White2023} and the references therein for an overview of the Elliott programme.}. 
A key step of the proof first establishes the case of UHF-algebras of infinite type using a representation theoretic argument. 
The main technical ingredient for combining the UHF-case and Enders--Schemeitat--Tikuisis' classification is a certain filtration of the Bernoulli shift action by invariant ideals that was introduced by Izumi \cite{Izumi} and later used in \cite{CEKN, Bunke2023} (see Proposition \ref{prop_Izumi_filtration}).
We prove the UHF case in slightly higher generality than that of Theorem \ref{thmD}:
\begin{introtheorem}[Theorem \ref{thm-kk-equivalence}] \label{thmA}
	Let $G$ be a finite group, let $Z$ be a countable $G$-set and let $M_\mathfrak n$ be a UHF-algebra of infinite type. Then $M_\mathfrak n$ is $KK^G$-equivalent to $M_\mathfrak n^{\otimes Z}$ where we equip $M_\mathfrak n$ with the trivial $G$-action and $M_{\mathfrak n}^{\otimes Z}$ with the Bernoulli shift. In particular, we have
		\[K_*\left(M_\mathfrak n^{\otimes Z}\rtimes G\right)\cong K_*(C^*(G)\otimes M_\mathfrak n)\cong K_*(C^*(G))[1/\mathfrak n].\]
\end{introtheorem}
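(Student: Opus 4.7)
The plan is to first establish the $KK^G$-equivalence asserted in Theorem~\ref{thm-kk-equivalence}, and then derive the $K$-theory formula by a short computation.

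For the $KK^G$-equivalence, decompose $Z$ as a disjoint union of orbits $Z = \bigsqcup_i G/H_i$, so that $M_{\mathfrak n}^{\otimes Z} \cong \bigotimes_i M_{\mathfrak n}^{\otimes G/H_i}$ as $G$-$C^*$-algebras. Since $M_{\mathfrak n}$ is of infinite type ($M_{\mathfrak n} \cong M_{\mathfrak n}^{\otimes \infty}$), I can splice together the orbitwise diagonal maps $a \mapsto a^{\otimes G/H_i}$ into a unital $*$-homomorphism $\iota \colon M_{\mathfrak n} \to M_{\mathfrak n}^{\otimes Z}$ whose image lies in the fixed subalgebra. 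With the trivial action on the source and the Bernoulli shift on the target, $\iota$ is $G$-equivariant, and the core task is to show that $[\iota] \in KK^G(M_{\mathfrak n}, M_{\mathfrak n}^{\otimes Z})$ is invertible.

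Non-equivariantly this is classical: $M_{\mathfrak n}$ has approximately inner flip (a consequence of infinite type), and a standard approximate-intertwining argument shows that every unital endomorphism of $M_{\mathfrak n}$ is a $KK$-equivalence; hence so is $\iota$ after identifying $M_{\mathfrak n}^{\otimes Z} \cong M_{\mathfrak n}$. The main obstacle is to run this equivariantly. The abstract's promise that the Bernoulli shift \emph{absorbs the trivial action up to conjugacy} indicates that the authors establish a $G$-equivariant isomorphism
\[
(M_{\mathfrak n}^{\otimes Z}, \text{shift}) \;\cong_G\; (M_{\mathfrak n}^{\otimes Z}, \text{shift}) \otimes (M_{\mathfrak n}, \text{triv});
\]
granted this tensorial absorption, the inverse of $[\iota]$ in $KK^G$ is built by a Kasparov product mirroring the non-equivariant picture. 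Establishing the absorption is where I expect the technical effort: the most natural route is a $G$-equivariant Elliott intertwining exploiting the availability of infinitely many fresh $M_{\mathfrak n}$-factors on orbits outside any prescribed finite subset of $Z$ to absorb a trivially acting copy of $M_{\mathfrak n}$.

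Given the $KK^G$-equivalence, the $K$-theory formula follows quickly. The descent functor $- \rtimes G \colon KK^G \to KK$ yields a $KK$-equivalence $M_{\mathfrak n}^{\otimes Z} \rtimes G \sim_{KK} M_{\mathfrak n} \rtimes G = M_{\mathfrak n} \otimes C^*(G)$, the last equality using triviality of the action. For the final identification, decompose $C^*(G) = \bigoplus_\pi M_{d_\pi}(\C)$ over the irreducible representations of $G$; combining $K_0(M_{\mathfrak n}) = \Z[1/\mathfrak n]$ and $K_1(M_{\mathfrak n}) = 0$ with the K\"unneth formula then gives $K_*(M_{\mathfrak n} \otimes C^*(G)) \cong K_*(C^*(G))[1/\mathfrak n]$.
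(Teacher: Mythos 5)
Your overall architecture (absorption of the trivial action plus invertibility of a single inclusion, followed by descent and K\"unneth) matches the paper's, and your final $K$-theory computation is fine. But there are two genuine problems in the core $KK^G$ step. First, the map you propose to invert does not exist as described: $a\mapsto a^{\otimes G/H_i}$ is neither linear nor multiplicative, so it does not splice into a $*$-homomorphism $\iota\colon M_{\mathfrak n}\to M_{\mathfrak n}^{\otimes Z}$. A unital equivariant embedding into the fixed-point algebra does exist, but only \emph{a posteriori}, by composing the first-factor inclusion with the absorption isomorphism; the paper sidesteps this by working with the zigzag $M_{\mathfrak n}\xrightarrow{\id\otimes 1} M_{\mathfrak n}\otimes M_{\mathfrak n}^{\otimes Z}\xleftarrow{1\otimes\id} M_{\mathfrak n}^{\otimes Z}$ and never constructs a direct map.

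Second, and more seriously, the absorption isomorphism only disposes of one leg of that zigzag: since the absorbed factor carries the trivial action, the intertwining unitaries witnessing strong self-absorption are $G$-invariant, so $M_{\mathfrak n}^{\otimes Z}\hookrightarrow M_{\mathfrak n}\otimes M_{\mathfrak n}^{\otimes Z}$ is equivariantly approximately unitarily equivalent to an isomorphism. The other leg, the inclusion of the trivially-acting copy $M_{\mathfrak n}\hookrightarrow M_{\mathfrak n}\otimes M_{\mathfrak n}^{\otimes Z}$, is the actual crux, and ``a Kasparov product mirroring the non-equivariant picture'' does not produce its inverse: the non-equivariant argument (every unital endomorphism of $M_{\mathfrak n}$ fixes $[1]\in K_0$, hence is a $KK$-equivalence by the UCT) has no equivariant analogue, because $KK^G(M_{\mathfrak n},M_{\mathfrak n})\cong R_\C(G)[1/\mathfrak n]$ contains many non-invertible unital classes. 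The paper's resolution is to write the inclusion as a homotopy colimit over finite $G$-subsets $Y\subseteq Z$ and finite divisors $k$ of $\mathfrak n$ of the classes $[\id_{M_{\mathfrak n}}]\otimes_\C[\pi_k]$, where $\pi_k$ is the permutation representation of $G$ on $\{1,\dotsc,k\}^Y$, and then to prove (Proposition \ref{prop-KK-elements}) that $[\pi_k]$ becomes invertible after inverting $k$: its character takes the values $k^{|Y/\langle g\rangle|}$, so a Cayley--Hamilton-type argument in $R_\C(G)$ yields $\beta$ with $[\pi_k]\beta=k^l$. This representation-theoretic input, together with the homotopy colimit reduction, is the heart of the theorem and is absent from your proposal.
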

The proof of Theorem \ref{thmA} relies on a representation theoretic argument about invertibility of a certain element in the representation ring $R_\C(G)$ after inverting sufficiently many primes (see Proposition \ref{prop-KK-elements}). A byproduct of the proof is that the Bernoulli shift absorbs the trivial action not only in $KK$-theory, but up to conjugacy. This reproves a result by Hirshberg--Winter (see \cite[Corollary 3.2]{HirshbergWinter} combined with \cite[Theorem 2.6]{Szabo2018}).
\begin{introtheorem}[Hirshberg--Winter, see Theorem \ref{thm-absorb}] \label{thmB}
	With the notation as in Theorem \ref{thmA}, there is a $G$-equivariant isomorphism 
		\[M_\mathfrak n^{\otimes Z} \cong M_\mathfrak n \otimes M_\mathfrak n^{\otimes Z}.\]
\end{introtheorem}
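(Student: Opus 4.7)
The plan is to verify the hypothesis of an equivariant McDuff-type absorption theorem, using that a UHF-algebra $M_{\mathfrak n}$ of infinite type is strongly self-absorbing and in particular has approximately inner flip. Write $A := M_{\mathfrak n}^{\otimes Z}$ for the Bernoulli-shift algebra and $A_\infty := \ell^\infty(\N, A)/c_0(\N, A)$ for its sequence algebra, on which the $G$-action extends componentwise.

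The substantive step is to construct a unital $G$-equivariant $*$-homomorphism
\[\phi \colon M_{\mathfrak n} \longrightarrow (A_\infty)^G \cap A',\]
where $M_{\mathfrak n}$ on the left carries the trivial action. Assuming $Z \neq \emptyset$ (otherwise both sides of the theorem are $\C$), I would fix a $G$-orbit $O \subseteq Z$ and, using infinite type, an isomorphism $M_{\mathfrak n} \cong \bigotimes_{k \in \N} N_k$ with each $N_k \cong M_{\mathfrak n}$, applied at every site of $O$. For each $k$, define $\phi_k \colon M_{\mathfrak n} \to A$ by placing $a$ into the $N_k$-slot at every site of $O$ and $1$ elsewhere. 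Because $G$ permutes the sites of $O$ while the same $a$ is placed at each, $\phi_k(a)$ is $G$-invariant. Because $\phi_k(a)$ lies deeper in the tail as $k \to \infty$, it eventually commutes with any given $b \in A$ (which is supported on finitely many sites and finitely many slots within each site). The sequence $(\phi_k(a))_k$ therefore descends to the desired $\phi$.

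Given $\phi$, an equivariant McDuff-type absorption theorem produces a genuine $G$-equivariant $*$-isomorphism $A \otimes M_{\mathfrak n} \cong A$, which is the desired conclusion. This is essentially the content of \cite[Lemma 3.1]{HirshbergWinter}, which the introduction also cites for an alternative extraction.

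The main obstacle is the final upgrade from an approximately central equivariant embedding into a genuine equivariant isomorphism — that is, the passage from cocycle conjugacy to honest conjugacy. This upgrade relies crucially on approximately inner flip of $M_{\mathfrak n}$, which supplies sufficiently many approximately $G$-invariant unitaries to trivialise the cocycles arising in the equivariant two-sided Elliott intertwining.
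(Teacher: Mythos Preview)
Your construction of $\phi_k$ is not a $*$-homomorphism. ``Placing $a$ into the $N_k$-slot at every site of $O$'' sends $a$ to $a^{\otimes |O|}$ inside $\bigotimes_{z\in O}N_k^{(z)}$, and this map is not additive as soon as $|O|>1$. If instead you place $a$ at a single site of $O$, the image is no longer $G$-invariant. There is no obvious diagonal-type embedding $M_{\mathfrak n}\to M_{\mathfrak n}^{\otimes O}$ whose image is fixed by the permutation action; producing such an embedding is exactly the non-trivial content of the theorem, and your argument assumes it rather than proves it. The obstacle you flag at the end (upgrading cocycle conjugacy to conjugacy) is in fact handled by standard equivariant McDuff machinery for strongly self-absorbing targets; the genuine gap is earlier, in the construction of $\phi$.

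The paper does not go through the sequence algebra at all. It first reduces to a single prime $p$ and a finite orbit $Z$, and then uses a representation-theoretic fact (Proposition~\ref{prop-KK-elements}): the permutation representation $\pi_p$ of $G$ on $\ell^2(\{1,\dots,p\}^Z)$ satisfies $[\pi_p]^r=p\,\alpha$ in $R_\C(G)$ for some $r\geq 1$ and some genuine representation $\alpha$ on $W_\alpha$. This yields an equivariant isomorphism $V_p^{\otimes r}\cong \C^p\otimes W_\alpha$, hence $(M_p^{\otimes Z})^{\otimes r}\cong M_p\otimes \End(W_\alpha)$ with $G$ acting trivially on the $M_p$ factor. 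Taking the infinite tensor product over $\N$ then visibly splits off a trivial copy of $M_{p^\infty}$. In other words, the very step you are missing---exhibiting a $G$-fixed unital copy of $M_p$ inside a finite tensor power of $M_p^{\otimes Z}$---is supplied by the character identity $\chi_{\pi_p}(g)=p^{|Z/\langle g\rangle|}$ and the resulting polynomial relation in $R_\C(G)$. Your McDuff strategy could be completed, but only after importing this same representation-theoretic input to build the embedding into the fixed central sequence; at that point the paper's direct argument is shorter.
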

One immediate consequence of Theorem \ref{thmA} and \cite[Theorem 3.13]{Izumi2} is that the Bernoulli shift $G\curvearrowright M_\mathfrak n^{\otimes Z}$ as above does not have the Rokhlin property (see Corollary \ref{cor-Rokhlin}).
Beyond finite group actions, Theorem \ref{thmA} also has consequences for infinite groups satisfying the Baum--Connes conjecture with coefficients \cite{BCH}. 
\begin{introcorollary}[Corollary \ref{cor-BCC}]\label{corB}
	Let $G$ be a countable discrete group satisfying the Baum--Connes conjecture with coefficients, let $Z$ be a $G$-set, let $A$ be a $G$-$C^*$-algebra and let $M_\mathfrak n$ be a UHF-algebra. Assume that $Z$ is infinite or that $\mathfrak n$ is of infinite type. Then the inclusion $A\to A\otimes M_\mathfrak n^{\otimes Z}$ induces an isomorphism 
		\[K_*\left(A\rtimes_r G\right)[1/\mathfrak n]\cong K_*\left(\left(A\otimes M_\mathfrak n^{\otimes Z}\right)\rtimes_r G\right).\]
	In particular, the right hand side is a $\Z[1/\mathfrak n]$-module.
\end{introcorollary}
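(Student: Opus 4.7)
Plan: The idea is to deduce the corollary from Theorem~\ref{thmA} via the Baum--Connes machinery. Under the Baum--Connes conjecture with coefficients for $G$, the functor $B \mapsto K_*(B \rtimes_r G)$ on $G$-$C^*$-algebras only sees the restriction to finite subgroups: any $G$-equivariant $KK^G$-morphism that restricts to a $KK^H$-equivalence for every finite subgroup $H \le G$ induces an isomorphism on $K$-theory of reduced crossed products. This Going-Down principle is a standard consequence of BCC via the Meyer--Nest triangulated framework (or Chabert--Echterhoff--Oyono-Oyono).

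I would apply this principle to a $G$-equivariant morphism $\iota \colon A \otimes M_\mathfrak n \to A \otimes M_\mathfrak n^{\otimes Z}$, constructed from the unital inclusion $A \to A \otimes M_\mathfrak n^{\otimes Z}$ tensored with $\id_{M_\mathfrak n}$ and post-composed with a suitable avatar of Theorem~\ref{thmB}'s absorption isomorphism. For each finite subgroup $H \le G$, Theorem~\ref{thmA} applied to $H$ acting on $Z$ (assuming $\mathfrak n$ of infinite type) provides the $KK^H$-equivalence $M_\mathfrak n \sim_{KK^H} M_\mathfrak n^{\otimes Z}$, and tensoring with $A$ produces the required restriction of $\iota$. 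In the remaining case, where $\mathfrak n$ is not of infinite type but $Z$ is infinite, $M_\mathfrak n^{\otimes Z}$ is automatically a UHF algebra of infinite type with the same prime divisors as $\mathfrak n$, so one reduces to Case~1 by regrouping the $Z$-fold tensor product as a tensor of infinite-type UHFs indexed by a smaller $H$-set, or by a direct colimit argument over finite $H$-invariant subsets $F \subset Z$.

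The Going-Down principle then yields a $KK$-equivalence $(A \otimes M_\mathfrak n) \rtimes_r G \sim (A \otimes M_\mathfrak n^{\otimes Z}) \rtimes_r G$, and the corollary follows by standard identifications: since $G$ acts trivially on $M_\mathfrak n$ we have $(A \otimes M_\mathfrak n) \rtimes_r G \cong (A \rtimes_r G) \otimes M_\mathfrak n$, and the K\"unneth formula (applicable since $K_*(M_\mathfrak n) = \Z[1/\mathfrak n]$ is torsion-free) gives $K_*((A \rtimes_r G) \otimes M_\mathfrak n) \cong K_*(A \rtimes_r G) \otimes_\Z \Z[1/\mathfrak n] = K_*(A \rtimes_r G)[1/\mathfrak n]$. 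The main obstacle will be constructing the $G$-equivariant morphism $\iota$ with the correct restrictions, since Theorems~\ref{thmA} and~\ref{thmB} are stated only for finite groups and must be promoted to a coherent $KK^G$-class; this requires checking that the finite-group absorption iso of Theorem~\ref{thmB} and the abstract $KK^H$-equivalence of Theorem~\ref{thmA} can be assembled globally in a $G$-equivariant way.
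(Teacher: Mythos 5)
Your overall strategy (Going-Down under Baum--Connes, Theorem~\ref{thmA} on finite subgroups, K\"unneth at the end) is the right one, but the step you yourself flag as ``the main obstacle'' is a genuine gap, and the way you propose to close it cannot work. You want a single $G$-equivariant morphism $\iota\colon A\otimes M_{\mathfrak n}\to A\otimes M_{\mathfrak n}^{\otimes Z}$ obtained by post-composing with ``an avatar of Theorem~\ref{thmB}'s absorption isomorphism''. But the absorption isomorphism $M_{\mathfrak n}\otimes M_{\mathfrak n}^{\otimes Z}\cong M_{\mathfrak n}^{\otimes Z}$ is equivariant only for \emph{finite} groups, and for an infinite group $G$ satisfying Baum--Connes with coefficients (e.g.\ a hyperbolic group) no such $G$-equivariant isomorphism exists in general: by Remark~\ref{rem-cocycle-conjugate}, absorption of the trivial action by the Bernoulli shift up to (cocycle) conjugacy characterizes amenability. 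So there is no global $*$-homomorphism to post-compose with, and assembling the individual $KK^H$-equivalences of Theorem~\ref{thmA}, as $H$ ranges over finite subgroups of $G$, into a single class in $KK^G$ is exactly the point that is not automatic and that your sketch leaves unresolved.

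The fix is to avoid mapping $A\otimes M_{\mathfrak n}$ \emph{into} $A\otimes M_{\mathfrak n}^{\otimes Z}$ altogether and instead use a roof: the two unital first-factor inclusions
\[
A\otimes M_{\mathfrak n^\infty}\;\xrightarrow{\ \phi_2\ }\;A\otimes M_{\mathfrak n^\infty}\otimes M_{\mathfrak n}^{\otimes Z}\;\xleftarrow{\ \phi_1\ }\;A\otimes M_{\mathfrak n}^{\otimes Z}
\]
are honest $G$-equivariant $*$-homomorphisms (no equivariance has to be manufactured), and Theorem~\ref{thm-kk-equivalence} --- including its second clause, which covers the case $Z$ infinite with $\mathfrak n$ finite --- says precisely that both restrict to $KK^H$-equivalences for every finite subgroup $H\leq G$. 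The Going-Down principle then applies to $\phi_1$ and $\phi_2$ separately, and the apex of the roof is identified via $K_*\left(\left(A\otimes M_{\mathfrak n^\infty}\right)\rtimes_r G\right)\cong K_*\left(A\rtimes_r G\right)[1/\mathfrak n]$. Note that one should use $M_{\mathfrak n^\infty}$ here rather than $M_{\mathfrak n}$, since $K_0(M_{\mathfrak n})\cong\Q_{\mathfrak n}$ differs from $\Z[1/\mathfrak n]$ when $\mathfrak n$ is not of infinite type. Finally, a preliminary reduction to $Z$ countable and $A$ separable (by an inductive limit argument) is needed before invoking any $KK$-theoretic statement.
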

Corollary \ref{corB} is particularly useful for analyzing the K-theory of Bernoulli shift actions of infinite groups, and it has been used in the paper \cite{CEKN} together with S. Chakraborty and S. Echterhoff to compute the $K$-theory of many more general Bernoulli shifts. A similar strategy has been used in the context of Farrell--Jones conjecture to compute the algebraic $K$-theory of wreath products \cite{Kranz2024}.

Another consequence of Theorem \ref{thmA} is that the Bernoulli shift of a countable amenable group $G$ on a strongly self-absorbing (in the sense of \cite{TW}) $C^*$-algebra $\mathcal D$ satisfying the UCT is $KK^G$-equivalent to the trivial $G$-action on $\mathcal D$ (see Corollary \ref{cor-ssa}; for $\mathcal D=\mathcal O_\infty$, this is \cite[Corollary 6.9]{Szabo}). 
\subsection*{Acknowledgements}
This research was partly supported by the Deutsche Forschungsgemeinschaft (DFG, German Research Foundation) under Germany's Excellence Strategy EXC 2044--390685587, Mathematics M\"unster: Dynamics--Geometry--Structure.
We acknowledge with appreciation helpful correspondence with Sayan Chakraborty, Siegfried Echterhoff, Jamie Gabe, Eusebio Gardella, Nigel Higson, Masaki Izumi, G\'abor Szab\'o, Aaron Tikuisis and David Vogan. We would like to thank the referee for helpful suggestions.  

\section{$KK$-theory of Bernoulli shifts}\label{sec-KK}
For a finite group $G$, denote by $R_\C(G)$ its representation ring, defined as the Grothendieck group of the monoid of isomorphism classes of finite-dimensional complex representations of $G$ with the direct sum as addition and the tensor product as multiplication. 
The character of a finite-dimensional complex representation $\pi\colon G\to \GL(V_\pi)$ is denoted by
	\[\chi_\pi\colon G\to \C, \,\,\, \chi_\pi(g)\coloneqq \mathrm{tr}\left(V_\pi\xrightarrow{\pi(g)}V_\pi\right),\]
where $\mathrm{tr}$ denotes the (non-normalized) trace. Recall that the map 
 	\[R_\C(G)\to \C_\class(G), \,\,\, \pi\mapsto \chi_\pi\]
is an injective ring homomorphism with values in the algebra $\C_\class(G)$ of conjugation invariant functions on $G$ with pointwise multiplication. There is a natural isomorphism $R_\C(G)\cong KK^G(\C,\C)$. 
We refer to \cite{Serre} for an introduction to representation theory of finite groups and to \cite{Kasparov} for the definition of equivariant $KK$-theory. 
\begin{proposition}\label{prop-KK-elements}
Let $G$ be a finite group, let $k\geq 1$ and let $Z$ be a finite $G$-set. Denote by $\pi_k\colon G\to \GL\left(\ell^2\left(\{1,\dotsc,k\}^Z\right)\right)$ the permutation representation associated to the $G$-set $\{1,\dotsc,k\}^Z$. Then the following hold.
	\begin{enumerate}
		\item There exist $\alpha\in R_\C(G)$ and $r\geq 1$ such that $[\pi_k]^r=k\alpha$.
		\item There exist $\beta\in R_\C(G)$ and $l\geq 1$ such that $[\pi_k]\cdot \beta=k^l$.
\end{enumerate}	
\end{proposition}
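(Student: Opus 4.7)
The starting point is the character computation: a function $f\colon Z\to\{1,\dotsc,k\}$ is fixed by $g\in G$ if and only if it is constant on each $\langle g\rangle$-orbit in $Z$, so
\[\chi_{\pi_k}(g)=k^{d(g)}, \qquad d(g):=|Z/\langle g\rangle|.\]
Consequently the ``eigenvalues'' of $[\pi_k]$ in $R_\C(G)\otimes\C\cong \C_\class(G)$ all lie in the finite set $\{k^d : d\in D\}$, where $D:=\{d(g):g\in G\}\subseteq \Z_{\geq 1}$ (assuming $Z$ nonempty; otherwise $[\pi_k]=1$ and both claims are trivial or vacuous). Therefore the polynomial
\[P(t):=\prod_{d\in D}(t-k^d)\in\Z[t]\]
satisfies $P(\chi_{\pi_k})\equiv 0$ as a class function. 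Since the character map $R_\C(G)\hookrightarrow \C_\class(G)$ is an injective ring homomorphism and $R_\C(G)$ is $\Z$-free, we conclude the integral identity
\[P([\pi_k])=0 \quad \text{in } R_\C(G).\]

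From this single identity both claims fall out by bookkeeping. Write $P(t)=t^m+c_{m-1}t^{m-1}+\cdots+c_1 t+c_0$ with $m=|D|$ and $c_0=(-1)^m k^L$, where $L=\sum_{d\in D}d\geq 1$. For \emph{(2)} I would solve the identity for its constant term: setting
\[\beta:=(-1)^{m+1}\bigl([\pi_k]^{m-1}+c_{m-1}[\pi_k]^{m-2}+\cdots+c_1\bigr)\in R_\C(G)\]
gives $[\pi_k]\cdot\beta=k^L$, so (2) holds with $l:=L$. For \emph{(1)} I would reduce $P$ modulo $k$: each coefficient $c_i$ with $0\le i<m$ is, up to sign, an elementary symmetric polynomial in the numbers $k^d$ ($d\in D$), each of which is divisible by $k$ since $d\geq 1$. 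Hence $P(t)\equiv t^m\pmod{k}$, and evaluating at $[\pi_k]$ gives $[\pi_k]^m\equiv 0\pmod{k}$ in $R_\C(G)$, i.e.\ $[\pi_k]^m = k\alpha$ for some $\alpha\in R_\C(G)$, proving (1) with $r:=m$.

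The only point requiring care -- and the expected (though very mild) obstacle -- is the bridge from a class-function identity in $\C_\class(G)$ to a genuine identity in $R_\C(G)$. This is handled by the classical fact that virtual representations are detected by their characters, together with torsion-freeness of $R_\C(G)$ as a $\Z$-module; no further representation-theoretic machinery (e.g.\ Brauer's description of $\mathrm{Spec}\, R_\C(G)$) is needed. Everything else reduces to the observation that the class function $g\mapsto k^{d(g)}$ only takes values that are positive powers of $k$, which forces its annihilator polynomial in $\Z[t]$ to be monic with all non-leading coefficients divisible by $k$.
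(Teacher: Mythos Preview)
Your proof is correct and follows essentially the same route as the paper: compute $\chi_{\pi_k}(g)=k^{|Z/\langle g\rangle|}$, observe that $\chi_{\pi_k}$ satisfies an integral polynomial identity $\prod(t-k^d)=0$, and transport this to $R_\C(G)$ via injectivity of the character map to read off both statements. The paper takes the product over all $g\in G$ (so a polynomial of degree $|G|$, with repeated roots), whereas you take the product over the set $D$ of distinct exponents, yielding sharper values $r=|D|$ and $l=\sum_{d\in D}d$; this is a cosmetic improvement, not a different idea.
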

\begin{proof}
	By considering the standard basis in $\ell^2\left(\{1,\dotsc,k\}^Z\right)$, it is easy to see that the trace of $\pi_k(g)$ for $g\in G$ is given by the number of $g$-fixed points in $\{1,\dotsc,k\}^Z$, which is the the same as the number of $\langle g\rangle$-invariant functions $Z\to \{1,\dotsc,k\}$. In other words, the character of $\pi_k$ is given by 
		\[\chi_{\pi_k}(g)= k^{|Z/\langle g\rangle |}.\]
	We therefore have
		\[\prod_{g\in G}\left(\chi_{\pi_k}-k^{|Z/\langle g \rangle|}\right)=0 \,\,\text{in} \,\, \C_{\class}(G).\]
	Since the map $\pi\mapsto \chi_\pi$ is injective, we also have 
		\[ \prod_{g\in G}\left([\pi_k] - k^{|Z/\langle g\rangle|}\right)=0 \,\,\text{in} \,\,  R_\C(G).\]
	In particular, there are polynomials $p,q\in \Z[t]$ satisfying 
		\[[\pi_k]^{|G|}=k p([\pi_k]),\quad [\pi_k]\cdot q([\pi_k])=\prod_{g\in G}k^{|Z/\langle g\rangle|},\]
	which proves the proposition.
\end{proof}
\begin{definition}
	Let $Z$ be a set and let $(A_z)_{z\in Z}$ be a collection of unital $C^*$-algebras. The infinite tensor product $\bigotimes_{z\in Z}A_z$ is defined as 
		\[\bigotimes_{z\in Z}A_z\coloneqq \varinjlim_{F} \bigotimes_{z\in F}A_z,\]
	where the inductive limit is taken over all finite subsets $F\subset Z$ ordered by inclusion, with respect to the connecting maps $a\mapsto a\otimes 1$.
	Given a discrete group $G$, a unital $C^*$-algebra $A$ and a $G$-set $Z$, the \emph{Bernoulli shift} of $G$ on $A^{\otimes Z}\coloneqq \bigotimes_Z A$ is the $G$-action induced by permuting the tensor factors according to the $G$-action on $Z$. 
\end{definition}
\begin{definition}
	A \emph{supernatural number} is a formal product $\mathfrak n= \prod_p p^{n_p}$ where $p$ runs over all primes and $n_p\in \{0,\dotsc,\infty\}$. The \emph{UHF-algebra} associated to $\mathfrak n$ is the infinite tensor product 
		\[M_{\mathfrak n}\coloneqq \bigotimes_p M_{p^{n_p}},\]
	with $M_{p^\infty}\coloneqq  M_p^{\otimes \N}$. We call $\mathfrak n$ or $M_{\mathfrak n}$ of \emph{infinite type} if $n_p\in \{0,\infty\}$ for all $p$. We say that $\mathfrak n=\prod p^{n_p}$ divides $\mathfrak m=\prod p^{m_p}$ if $n_p\leq m_p$ for all $p$. 
\end{definition}
\begin{remark}
	Note that the above definition includes natural numbers and matrix algebras as a special case.
\end{remark}
\begin{definition}
	If $M$ is an abelian group, we denote by $M[1/\mathfrak n]$ the inductive limit of the system
		\[M\xrightarrow{\cdot p_1}M\xrightarrow{\cdot p_2}M\xrightarrow{\cdot p_3}\dotsb\]
	where $(p_1,p_2,\dotsc)$ contains each prime dividing $\mathfrak n$ infinitely many times.
\end{definition}
\begin{remark}\label{rem-supernatural}
	 If $\mathfrak q=\prod_p p^{n_p}$ with $n_p\geq 1$ for all $p$, then 
	 	\[M[1/\mathfrak q]\cong M\otimes_\Z \Q.\] 
	 If $k\geq 1$ is a positive integer, then 
		\[\Z[1/k]\cong\left\{\frac m {k^n} \,\middle|\, m\in \Z, n\in \Z_{>0}\right\}\subset\Q.\]
	In general, the group $\Z[1/\mathfrak n]$ is different from the closely related group
		\[\Q_\mathfrak n\coloneqq \left \{\frac m k \,\middle|\, m\in \Z, k\in \Z_{>0} \text{ divides } \mathfrak n\right\},\]
	unless $\mathfrak n$ is of infinite type. 
\end{remark}
As an application of Proposition \ref{prop-KK-elements}, we obtain an alternative proof of \cite[Corollary 3.2]{HirshbergWinter}.
\begin{theorem}[Hirshberg--Winter]\label{thm-absorb}
	Let $G$ be a finite group, let $M_{\mathfrak n}$ be a UHF-algebra and let $Z$ be a $G$-set. Assume that $Z$ is infinite or that $\mathfrak n$ is of infinite type. Equip $M_\mathfrak n$ with the trivial $G$-action and $M_\mathfrak n ^{\otimes Z}$ with the Bernoulli shift. 	
	Then there is an equivariant isomorphism
		\[M_\mathfrak n^{\otimes Z}\otimes M_\mathfrak n\cong M_\mathfrak n^{\otimes Z}.\]
		If $Z$ is infinite, and $m<\infty$, there is an equivariant isomorphism
		\[
		M_ m^{\otimes Z}\otimes M_{ m^\infty} \cong M_ m^{\otimes Z}.
		\]
\end{theorem}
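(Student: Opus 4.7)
The approach is to invoke an equivariant McDuff-type absorption theorem: for a finite group $G$, a separable unital $G$-$C^*$-algebra $B$, and a strongly self-absorbing $C^*$-algebra $D$ equipped with the trivial $G$-action, a unital $G$-equivariant $*$-homomorphism from $D$ into the equivariant central sequence algebra $B_\infty\cap B'$ forces an equivariant isomorphism $B\otimes D\cong B$. For finite $G$ this is a routine equivariant Elliott intertwining argument and is essentially \cite[Lemma 3.1]{HirshbergWinter}. Since $M_\mathfrak n$ of infinite type and $M_{m^\infty}$ are strongly self-absorbing UHF-algebras, both statements reduce to constructing such an equivariant approximately central embedding. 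Separability of $B$ may be arranged by first splitting off a countably infinite $G$-invariant subset $Z'\subseteq Z$ (any countable union of orbits), writing $B\cong A^{\otimes Z'}\otimes A^{\otimes(Z\setminus Z')}$ with $A$ the relevant UHF-algebra, and tensoring the absorption obtained for $Z'$ with $A^{\otimes(Z\setminus Z')}$.

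For the first statement, the case in which $Z$ is finite but $\mathfrak n$ is of infinite type is reduced to the infinite case via the isomorphism $M_\mathfrak n\cong M_\mathfrak n^{\otimes\mathbb N}$ of $C^*$-algebras with trivial $G$-action, which yields
$$M_\mathfrak n^{\otimes Z}\cong M_\mathfrak n^{\otimes(\mathbb N\times Z)},$$
where $\mathbb N\times Z$ carries the $G$-action trivial on $\mathbb N$.

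Assume now that $Z$ is countably infinite; let $D=M_\mathfrak n$ (first statement) or $D=M_{m^\infty}$ (second statement), and write $D=\bigotimes_{k\in\mathbb N}D_k$ as an infinite tensor product of copies of $M_\mathfrak n$ or $M_m$ respectively. Because $G$ is finite every orbit in $Z$ is finite, so we may choose pairwise disjoint nonempty finite $G$-invariant subsets $Z_1,Z_2,\dotsc\subseteq Z$ (for example, any sequence of distinct orbits). The diagonal map
$$\Delta_k\colon D_k\longrightarrow B,\quad a\longmapsto a^{\otimes Z_k}\otimes 1_{Z\setminus Z_k},$$
is a unital $*$-homomorphism, and is $G$-equivariant since $a^{\otimes Z_k}$ is manifestly invariant under the permutation action of $G$ on the $Z_k$-factors. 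The images of distinct $\Delta_k$ commute because the $Z_k$ are pairwise disjoint, so the $\Delta_k$ assemble into a unital $G$-equivariant $*$-homomorphism $\Phi\colon D\to B$.

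Approximate centrality is then automatic: every $b\in B$ is arbitrarily close to an element supported on a finite $F\subseteq Z$, and only finitely many $Z_k$ meet $F$, so for $N$ large the image of the tail $\bigotimes_{k>N}D_k$ lies in tensor factors disjoint from $F$ and commutes with $b$. This produces the required unital equivariant $*$-homomorphism $D\to B_\infty\cap B'$, and the absorption theorem concludes both statements. The principal ingredient is the equivariant McDuff absorption theorem (or equivalently the cited Hirshberg--Winter lemma); the orbit-partition and diagonal-embedding construction itself is elementary and handles both parts uniformly.
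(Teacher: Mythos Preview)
Your overall strategy---reduce to producing a $G$-equivariant unital embedding of the strongly self-absorbing UHF-algebra into the equivariant central sequence algebra and then invoke a McDuff-type absorption result---is sound, and the reductions (to countable $Z$, and from finite $Z$ with $\mathfrak n$ of infinite type to infinite $Z$ via $M_\mathfrak n\cong M_\mathfrak n^{\otimes\N}$) are fine.

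The construction of the embedding, however, is broken. The ``diagonal map'' $\Delta_k\colon a\mapsto a^{\otimes Z_k}\otimes 1$ is multiplicative and $*$-preserving but \emph{not linear} as soon as $|Z_k|\geq 2$; for example $(a+b)\otimes(a+b)\neq a\otimes a+b\otimes b$. Hence $\Delta_k$ is not a $*$-homomorphism and your map $\Phi$ does not exist. Since the $Z_k$ must be $G$-invariant, they are unions of orbits; when $Z$ has no $G$-fixed points (e.g.\ $Z=G$ with left translation and $G\neq\{e\}$) you cannot take $|Z_k|=1$, so the problem cannot be sidestepped.

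This gap is precisely where the paper's representation-theoretic input (Proposition~\ref{prop-KK-elements}) enters. For a finite orbit-closed $Y$ the paper shows $[\pi_p]^r=p\alpha$ in $R_\C(G)$ for some $r\geq 1$ and some genuine representation $W_\alpha$, which yields a $G$-equivariant isomorphism $(M_p^{\otimes Y})^{\otimes r}\cong M_p\otimes\End(W_\alpha)$ with trivial $G$-action on the $M_p$ factor. The first-factor inclusion then gives an \emph{honest} equivariant $*$-homomorphism $M_p\hookrightarrow (M_p^{\otimes Y})^{\otimes r}$; stacking infinitely many of these over disjoint invariant blocks produces the approximately central equivariant copy of $M_{p^\infty}$ your framework needs. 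So the architecture of your argument is correct, but the missing idea---how to get a genuine equivariant matrix embedding into a tensor power of the Bernoulli shift---is exactly the character computation that the paper supplies.
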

\begin{proof}
	Note that it suffices to prove the statement in the case that $M_\mathfrak n=M_{p^k}$ (or $M_m=M_{p^k}$) for a prime $p$ and $k\in \{0,1,\dotsc,\infty\}$, since the general case follows by taking (possibly infinite) tensor products over all primes. As before, if $Z$ is finite, we denote by $\pi_p$ the permutation representation of $G$ on $V_p\coloneqq \ell^2(\{1,\dotsc,p\}^Z)$, so that $M_p^{\otimes Z}$ is equivariantly isomorphic to $\End(V_p)$. 
	
	Assume first that $k=\infty$. We only need to prove the theorem for (any) one $G$-orbit of $Z$ so we may assume that $Z$ is finite. Let $\alpha\in R_\C(G)$ and $r\geq 1$ be as in Proposition \ref{prop-KK-elements} so that $[\pi_p]^r=p\alpha\in R_\C(G)$. Since $[\pi_p]$ is a non-negative linear combination of irreducible representations of $G$, $\alpha$ has to be the class of a finite-dimensional representation $\pi_\alpha\colon G\to \GL(W_\alpha)$. In particular, we have an equivariant isomorphism $V_p^{\otimes r}\cong \C^p\otimes W_\alpha$. Passing to endomorphisms, we obtain an equivariant isomorphism
		\[\left(M_p^{\otimes Z}\right)^{\otimes r}\cong M_p\otimes \End(W_\alpha)\]
	with the trivial $G$-action on $M_p$. By taking the infinite tensor product we obtain an equivariant isomorphism
		\[M_{p^\infty}^{\otimes Z}\cong M_{p^\infty}\otimes \End(W_\alpha)^{\otimes \N}\cong M_{p^\infty}\otimes M_{p^\infty}\otimes \End(W_\alpha)^{\otimes \N}\cong M_{p^\infty}\otimes M_{p^\infty}^{\otimes Z}.\]
		
	Assume now that $k<\infty$ and that $Z$ is infinite. Then $Z$ contains infinitely many orbits of the same type $G/H$. We may thus assume\footnote{The general case follows by taking tensor products with the remaining factor $M_{\mathfrak n}^{\otimes (Z- \sqcup_\N G/H)}$.} that $Z$ is of the form $Z=\bigsqcup_\N G/H$ for some subgroup $H\subset G$. Then there is an equivariant isomorphism $M_{p^k}^{\otimes Z}\cong M_{p^\infty}^{\otimes G/H}$. This reduces the proof to the case considered above. 
\end{proof}
\begin{theorem}\label{thm-kk-equivalence}
	Let $G$ be a finite group, let $Z$ be a countable $G$-set and let $M_\mathfrak n$ be a UHF-algebra of infinite type. Then the canonical inclusions 
		\[M_{\mathfrak n}\hookrightarrow M_{\mathfrak n}\otimes M_{\mathfrak n}^{\otimes Z}\hookleftarrow M_{\mathfrak n}^{\otimes Z}\]
	are $KK^G$-equivalences, where $M_{\mathfrak n}$ is endowed with the trivial action and where $M_{\mathfrak n}^{\otimes Z}$ is endowed with the Bernoulli shift. If $Z$ is infinite, and $m<\infty$, the same conclusion holds for the inclusions
	\[
		M_{m^\infty}\hookrightarrow M_{m^\infty}\otimes M_{ m}^{\otimes Z}\hookleftarrow M_{m}^{\otimes Z}.
	\]
\end{theorem}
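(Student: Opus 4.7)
The plan is to analyze both inclusions via Morita equivalence, reducing the problem to invertibility of an element of $R_\C(G)$ after inverting $\mathfrak n$; the required invertibility is supplied by Proposition~\ref{prop-KK-elements}(2). First I would reduce to the case $\mathfrak n = p^\infty$ for a single prime $p$ and $Z = G/H$ a single orbit, using continuity of $KK^G$ for sequential inductive limits of separable $G$-$C^*$-algebras, together with the compatibility of such limits with tensor products over the primes dividing $\mathfrak n$ and the orbits of $Z$. In this setting I write $M_{p^\infty} = \varinjlim_n M_p^{\otimes n}$ (trivial $G$-action) and $M_{p^\infty}^{\otimes G/H} = \varinjlim_m \End(V_p^{\otimes m})$, where $V_p = \ell^2(\{1,\dots,p\}^{G/H})$ is the permutation representation from Proposition~\ref{prop-KK-elements}.

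At each finite stage $(n,m)$, the restrictions of $\iota_L$ and $\iota_R$ become unital equivariant $*$-homomorphisms of matrix algebras $\End(W_1) \to \End(W_2)$ associated to finite-dimensional $G$-representations. Each $\End(W)$ is $KK^G$-equivalent to $\C$ via the imprimitivity bimodule $W$, and under these Morita identifications the $KK^G$-class of such a $*$-homomorphism is the class in $R_\C(G)$ of the ``complementary'' representation $W'$ satisfying $W_2 \cong W_1 \otimes W'$. A direct computation then shows that $\iota_L$ at stage $(n,m)$ corresponds to $[V_p]^m$ and $\iota_R$ at stage $(n,m)$ corresponds to $p^m \cdot [V_p]^{m-n}$ (for $m \geq n$). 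Passing to the double limit (inductive in $m$, projective in $n$) and using Proposition~\ref{prop-KK-elements}(2) to invert $[V_p]$ in $R_\C(G)[1/p]$, one identifies the limit group with $R_\C(G)[1/p] \cong KK^G(M_{p^\infty}, M_{p^\infty})$, and both $[\iota_L]$ and $[\iota_R]$ turn out to correspond to units. Hence both inclusions are $KK^G$-equivalences.

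The statement for $m < \infty$ and $Z$ infinite reduces to the previous case via the pigeonhole observation from the proof of Theorem~\ref{thm-absorb}: one has an equivariant isomorphism $M_m^{\otimes Z} \cong M_{m^\infty}^{\otimes Z'}$ for a suitable countable $G$-set $Z'$, after which the conclusion is immediate from the case already treated.

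The main obstacle I anticipate is the careful control of the double limit and the vanishing of the associated Milnor $\lim^1$ terms; since all transition maps become $KK^G$-equivalences after inverting $p$ (by Proposition~\ref{prop-KK-elements}(2)), these $\lim^1$ terms should vanish, but writing this out rigorously will require some bookkeeping with the $R_\C(G)[1/p]$-module structure.
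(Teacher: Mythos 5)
Your proposal is essentially sound and rests on the same key input as the paper, namely Proposition \ref{prop-KK-elements}(2), which makes the permutation class $[\pi_k]$ invertible once $k$ is inverted; but the architecture is genuinely different. The paper treats the two inclusions asymmetrically: the right-hand inclusion $M_{\mathfrak n}^{\otimes Z}\hookrightarrow M_{\mathfrak n}\otimes M_{\mathfrak n}^{\otimes Z}$ is disposed of \emph{softly}, by combining the equivariant isomorphism of Theorem \ref{thm-absorb} with the fact that $M_{\mathfrak n}$ is strongly self-absorbing, so no representation theory is needed there at all; only the left-hand inclusion requires the finite-stage computation, it is identified with the single exterior product $[\id_{M_{\mathfrak n}}]\otimes_\C[\pi_k]$ and inverted directly by $\frac{1}{k^l}[\id_{M_{\mathfrak n}}]\otimes_\C\beta$, and the passage to the limit is done with homotopy colimits in the triangulated category $KK^G$ (via \cite{MN}) rather than with explicit Milnor sequences. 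Your uniform Morita-theoretic treatment of both inclusions buys symmetry and makes the $R_\C(G)[1/p]$-module structure of all the relevant $KK^G$-groups explicit, at the cost of exactly the $\lim/\lim^1$ bookkeeping you anticipate; note that the order of your double limit is forced (inductive limits in the second variable are only controlled because at each fixed $n$ the source is Morita equivalent to $\C$, so that $KK^G(M_p^{\otimes n},-)$ is continuous equivariant $K$-theory).

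Three details need attention. First, under the standard Morita identification the class of the unital inclusion $\End(V_p^{\otimes m})\hookrightarrow\End((\C^p)^{\otimes n}\otimes V_p^{\otimes m})$ is the class of the complementary representation $(\C^p)^{\otimes n}$, i.e.\ $p^n$, not $p^m\cdot[V_p]^{m-n}$; this does not affect your conclusion (both are units in $R_\C(G)[1/p]$ once $[V_p]$ is inverted), but the formula as stated looks wrong. Second, the step from ``$[\iota_L]$ and $[\iota_R]$ correspond to units of $R_\C(G)[1/p]$'' to ``the inclusions are $KK^G$-equivalences'' requires that your identifications of $KK^G(A,B)$, $KK^G(B,A)$, $KK^G(A,A)$ and $KK^G(B,B)$ with $R_\C(G)[1/p]$ intertwine the Kasparov composition pairings with ring multiplication; this is true but is precisely where the work lies, and you should either verify it or instead exhibit the inverse directly as a compatible system of finite-stage classes built from $\beta$. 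Third, for $m<\infty$ and $Z$ infinite the asserted equivariant isomorphism $M_m^{\otimes Z}\cong M_{m^\infty}^{\otimes Z'}$ is not literally available for all of $Z$: pigeonhole only regroups the infinitely many orbits of one fixed type, leaving a residual tensor factor. The reduction still works, but it should be phrased as in the paper, by running the same finite-stage argument with $M_{m^\infty}$ in the first variable and finite $G$-subsets of $Z$ in the second.
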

\begin{proof}
	 Since $M_\mathfrak n$ is strongly self-absorbing (in the sense of \cite{TW}), the map 
	\[\id_{M_\mathfrak n} \otimes 1\colon M_\mathfrak n\to M_\mathfrak n\otimes M_\mathfrak n\]
	is a $KK$-equivalence.
	Using Theorem \ref{thm-absorb}, we can identify the map
		\[\id_{M_\mathfrak n^{\otimes Z}} \otimes 1\colon M_\mathfrak n^{\otimes Z}\hookrightarrow  M_\mathfrak n^{\otimes Z}\otimes M_\mathfrak n\]
	with the map
		\[\id_{M_\mathfrak n^{\otimes Z}}\otimes (\id_{M_\mathfrak n}\otimes 1) \colon M_\mathfrak n^{\otimes Z}\otimes M_\mathfrak n\to 	M_\mathfrak n^{\otimes Z}\otimes M_\mathfrak n\otimes M_\mathfrak n,\]
	which is a $KK^G$-equivalence.
	 Similarly, if $Z$ is infinite and $m<\infty$, the map
			\[M_m^{\otimes Z}\hookrightarrow M_{m^\infty} \otimes M_m^{\otimes Z}\]
			is a $KK^G$-equivalence. 
	We prove that the map
		\begin{equation}\label{eq-infinite-stage}
			\id_{M_\mathfrak n}\otimes 1_{M_\mathfrak n^{\otimes Z}}\colon M_\mathfrak n\to M_\mathfrak n\otimes M_\mathfrak n^{\otimes Z}
		\end{equation}
	is a $KK^G$-equivalence. Note that this map is the inductive limit of the maps 
		\begin{equation}\label{eq-finite-stage}
			\id_{M_\mathfrak n}\otimes 1_{M_k^{\otimes Y}}\colon M_\mathfrak n\to M_\mathfrak n\otimes M_k^{\otimes Y}
		\end{equation}
	where $k$ ranges over all positive integers that divide $\mathfrak n$ and where $Y$ ranges over all finite $G$-subsets of $Z$. It follows from the finiteness of $G$, the nuclearity of the involved algebras and \cite[Proposition 2.6, Lemma 2.7]{MN} that the map in \eqref{eq-infinite-stage} is also the homotopy colimit (with respect to the triangulated structure of $KK^G$) of the maps in \eqref{eq-finite-stage}. Since a homotopy colimit of $KK^G$-equivalences is a $KK^G$-equivalence\footnote{This follows from the axioms of a triangulated category. The fact that homotopy colimits of maps are not unique does not cause a problem here. The reader may alternatively perform the same argument in the $\infty$-category $KK^G_{\mathrm{sep}}$ introduced in \cite{Bunke2023a}.}, it suffices to show that the maps appearing in \eqref{eq-finite-stage} are $KK^G$-equivalences. 
	
	Note that $\ell^2\left(\{1,\dotsc,k\}^Y\right)$ implements an equivariant Morita equivalence between $M_k^{\otimes Y}$ and $\C$ which maps the class of the inclusion $\C\to M_k^{\otimes Y}$ in $KK^G(\C,M_k^{\otimes Y})$ to the class $[\pi_k]\in KK^G(\C,\C)$ of the permutation representation $\pi_k\colon G\to \GL\left(\ell^2\left(\{1,\dotsc,k\}^Y\right)\right)$.
	Therefore, the maps in \eqref{eq-finite-stage} can be identified with the elements $[\id_{M_\mathfrak n}]\otimes_\C [\pi_k]\in KK^G(M_\mathfrak n,M_\mathfrak n)$.
	
	 By Proposition \ref{prop-KK-elements}, there is an element $\beta\in KK^G(\C,\C)$ and $l\geq 1$ such that $[\pi_k] \beta=k^l$. Thus $[\id_{M_\mathfrak n}]\otimes_\C [\pi_k]$ is invertible with inverse $\frac 1 {k^{l}}[\id_{M_\mathfrak n}]\otimes_\C \beta$.  The same proof shows that, if $Z$ is infinite and $m<\infty$, the map
	\[
			\id_{M_{m^\infty}}\otimes 1_{M_m^{\otimes Z}}\colon M_{m^\infty}\to M_{m^\infty} \otimes M_m^{\otimes Z}
\]
	is a $KK^G$-equivalence.
\end{proof}
\begin{remark}\label{rem-cocycle-conjugate}
	By \cite[Theorem B]{GardellaLupini} and \cite[Theorem 4.9]{matui2014ƶ}, a countable discrete group $G$ is amenable if and only if for some (any) supernatural number $\mathfrak n\neq 1$ of infinite type, the Bernoulli shift on $M_{\mathfrak n}^{\otimes G}$ absorbs the trivial action on the Jiang-Su algebra $\mathcal Z$ up to cocycle conjugacy. In particular (since $M_\mathfrak n\cong M_\mathfrak n\otimes \mathcal Z$), the conclusion of Theorem \ref{thm-absorb} is false for non-amenable groups. On the other hand, Theorem \ref{thm-kk-equivalence} together with the Higson-Kasparov Theorem \cite{HK} (applied in the form of \cite[Theorem 8.5]{MN}) implies that if $G$ is a countable amenable group, then $M_{\mathfrak n}^{\otimes G}$ absorbs the trivial action on $M_\mathfrak n$ up to $KK^G$-equivalence. It is thus conceivable that a countable discrete group $G$ is amenable if and only if the Bernoulli shift on $M_\mathfrak n^{\otimes G}$ absorbs the trivial action on $M_\mathfrak n$ up to cocycle conjugacy. 
	
	The following observation provides some evidence for this: Let $G$ be a countable amenable group, $\mathfrak n\neq 1$ a supernatural number of infinite type, and $A$ a $G$-$C^*$-algebra. By the remarks above, the unital embedding
		 \[\id\otimes 1\colon(A\otimes M_\mathfrak n^{\otimes G})\rtimes G\hookrightarrow (A\otimes M_\mathfrak n^{\otimes G})\rtimes G\otimes M_\mathfrak n\]
	is a $KK$-equivalence between $\mathcal Z$-stable $C^*$-algebras that induces an isomorphism on the trace spaces, in particular it induces an isomorphism on the Elliott invariants. If we additionally assume that $(A\otimes M_\mathfrak n^{\otimes G})\rtimes G$ is simple, separable, nuclear, and satisfies the UCT (which happens in many cases of interest), then the classification of unital, simple, separable, nuclear, $\mathcal Z$-stable $C^*$-algebras satisfying the UCT \cite{KirchbergPhillips,EGLN,TWW,CETWW,Carrion2023} implies that $(A\otimes M_\mathfrak n^{\otimes G})\rtimes G\cong (A\otimes M_{\mathfrak n}^{\otimes G})\rtimes G\otimes M_\mathfrak n$. This condition is certainly necessary for $M_\mathfrak n^{\otimes G}$ to absorb $M_\mathfrak n$ up to cocycle conjugacy.
\end{remark}
\begin{corollary}\label{cor-Rokhlin}
	Let $G\not=\{e\}$ be a finite group, let $Z$ be a $G$-set and let $M_\mathfrak n$ be a UHF-algebra of infinite type. Then the Bernoulli shift of $G$ on $M_{\mathfrak n}^{\otimes Z}$ does not have the Rokhlin property. 
\end{corollary}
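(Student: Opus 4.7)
The plan is to combine the $K$-theory computation of Theorem \ref{thmA} with the $K$-theoretic obstruction to the Rokhlin property established by Izumi in \cite[Theorem 3.13]{Izumi2}, and extract a contradiction from comparing rational ranks.

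First, I would record what Theorem \ref{thm-kk-equivalence} gives about the crossed product. Since $M_\mathfrak{n}$ is UHF of infinite type, we have
\[
K_0\left(M_\mathfrak{n}^{\otimes Z}\rtimes G\right) \otimes_\Z \Q \;\cong\; K_0(C^*(G)) \otimes_\Z \Q \;\cong\; R_\C(G)\otimes_\Z \Q,
\]
a $\Q$-vector space whose dimension equals the number of conjugacy classes of $G$. Because $G\neq\{e\}$, this dimension is at least $2$.

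Next I would compare this with what the Rokhlin property would predict. Izumi's theorem asserts in particular that if a finite group $G$ acts on a unital $C^*$-algebra $A$ with the Rokhlin property, then $K_*(A\rtimes G)$ and the $G$-invariants $K_*(A)^G$ become isomorphic after inverting $|G|$ (and hence after tensoring with $\Q$). Applied to $A=M_\mathfrak{n}^{\otimes Z}$, one has $K_0(A)\cong \Z[1/\mathfrak{n}]$, a group of rational rank $1$; moreover the Bernoulli shift acts trivially on $K_0(A)$ since $K_0(A)$ is generated by the class of the unit (which is fixed by every unital automorphism). Therefore $K_0(A)^G\otimes_\Z\Q\cong \Q$, and the predicted isomorphism forces $K_0(A\rtimes G)\otimes_\Z \Q$ to have dimension $1$, contradicting the rank computation above.

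The only step requiring any care is verifying that the hypotheses of Izumi's theorem apply and that our $G$-action on $K_*(M_\mathfrak{n}^{\otimes Z})$ really is trivial; both are essentially automatic, so I expect no substantive obstacle, and the whole argument really is an immediate consequence of Theorem \ref{thmA} together with the cited result of Izumi, as claimed in the introduction.
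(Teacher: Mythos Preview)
Your argument is correct and follows essentially the same route as the paper: assume the Rokhlin property, invoke Izumi's \cite[Theorem 3.13]{Izumi2} to force the rank of $K_0$ of the crossed product to be $1$, and contradict the rank $\geq 2$ coming from Theorem~\ref{thm-kk-equivalence}. Two minor differences are worth noting. First, the paper uses Izumi's theorem in the form of a direct isomorphism $K_0\bigl(M_\mathfrak{n}^{\otimes Z}\rtimes G\bigr)\cong K_0\bigl(M_\mathfrak{n}^{\otimes Z}\bigr)\cong\Z[1/\mathfrak n]$ and compares this to $\Z[1/\mathfrak n]^{\oplus\hat G}$ without tensoring by $\Q$; your rational-rank version reaches the same contradiction but phrases Izumi's conclusion slightly differently (via $K_*(A)^G$ after inverting $|G|$). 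Second, you flag but do not carry out the verification that the hypotheses of \cite[Theorem 3.13]{Izumi2} are met; the paper does this in a footnote, citing \cite[Proposition 7.1.3]{Phillips} and \cite[Theorem 3.1]{Kishimoto}.
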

\begin{proof}
	 Assume the contrary. Without loss of generality, $\fn\not=1$. 
	 Then \cite[Theorem 3.13]{Izumi2} (which is applicable by the combination of \cite[Proposition 7.1.3]{Phillips} and \cite[Theorem 3.1]{Kishimoto}) yields an isomorphism
		\[K_0\left(M_{\mathfrak n}^{\otimes Z}\rtimes G\right)\cong K_0\left(M_\mathfrak n^{\otimes Z}\right)=\Z[1/\mathfrak n].\]
	On the other hand, Theorem \ref{thm-kk-equivalence} yields an isomorphism\footnote{This $K$-theoretic statement follows from the countable case by taking inductive limits over all countable $G$-subsets of $Z$.}
		\[K_0\left(M_{\mathfrak n}^{\otimes Z}\rtimes G\right)\cong K_0(C^*(G))\otimes_{\Z} \Z[1/\mathfrak n]\cong \Z[1/\mathfrak n]^{\oplus \hat G},\]
	a contradiction.	
\end{proof}
The following corollary is particularly useful for analyzing the K-theory of Bernoulli shift actions of infinite groups and plays a crucial role in the proof of \cite[Theorem A]{CEKN}. We refer to \cite{BCH} for the formulation of the Baum--Connes conjecture with coefficients. Note that the Baum-Connes conjecture with coefficients holds for many groups, including a-T-menable groups \cite{HK} and hyperbolic groups \cite{Lafforgue}. 
\begin{corollary}\label{cor-BCC}
	Let $G$ be a countable discrete group satisfying the Baum--Connes conjecture with coefficients, let $Z$ be a $G$-set, let $A$ be a $G$-$C^*$-algebra and let $M_\mathfrak n$ be a UHF-algebra. Assume that $Z$ is infinite or that $\mathfrak n$ is of infinite type. Then the inclusion $A\to A\otimes M_\mathfrak n^{\otimes Z}$ induces an isomorphism 
		\[K_*\left(A\rtimes_r G\right)[1/\mathfrak n]\cong K_*\left(\left(A\otimes M_\mathfrak n^{\otimes Z}\right)\rtimes_r G\right).\]
	In particular, the right hand side is a $\Z[1/\mathfrak n]$-module.
\end{corollary}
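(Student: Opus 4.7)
The plan is to factor the inclusion $A \hookrightarrow A \otimes M_\mathfrak n^{\otimes Z}$, $a\mapsto a\otimes 1$, through $A \otimes M_\mathfrak n$ carrying the trivial $G$-action on the new tensor factor, and to analyze the two factors on $K$-theory of reduced crossed products separately. When $\mathfrak n$ is of infinite type, Theorem \ref{thm-absorb} gives an equivariant isomorphism $M_\mathfrak n \otimes M_\mathfrak n^{\otimes Z} \cong M_\mathfrak n^{\otimes Z}$, producing the factorization
\[A \xrightarrow{\id_A \otimes 1} A\otimes M_\mathfrak n \xrightarrow{\id\otimes 1} A\otimes M_\mathfrak n \otimes M_\mathfrak n^{\otimes Z} \xrightarrow{\cong} A\otimes M_\mathfrak n^{\otimes Z}.\]
The remaining case ($\mathfrak n$ not of infinite type, $Z$ infinite) is handled in the same way, replacing the middle $M_\mathfrak n$ by $M_{\mathfrak n^\infty}$ (the supernatural number obtained from $\mathfrak n$ by raising every positive exponent to $\infty$) and invoking the second clause of Theorem \ref{thm-kk-equivalence}; since $\Z[1/\mathfrak n]=\Z[1/\mathfrak n^\infty]$, the conclusion is unaffected.

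For the first factor, the new tensor factor has trivial $G$-action and is nuclear, so $(A\otimes M_\mathfrak n)\rtimes_r G \cong (A\rtimes_r G)\otimes M_\mathfrak n$; writing $M_\mathfrak n$ as the inductive limit of matrix algebras whose successive embeddings realize all prime factors of $\mathfrak n$ identifies the induced map on $K$-theory with the canonical localization $K_*(A\rtimes_r G) \to K_*(A\rtimes_r G)[1/\mathfrak n]$.

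For the second factor, Theorem \ref{thm-kk-equivalence} provides that $M_\mathfrak n \hookrightarrow M_\mathfrak n \otimes M_\mathfrak n^{\otimes Z}$ is a $KK^H$-equivalence for every finite subgroup $H \leq G$ (viewing $Z$ as an $H$-set). Tensoring externally with $A$ preserves $KK^H$-invertibility, and nuclearity of the UHF factors makes the tensor products with $A$ unambiguous, so the second factor itself is a $KK^H$-equivalence for every finite $H \leq G$. By the Meyer--Nest formulation of the Baum--Connes conjecture with coefficients \cite{MN}, any $KK^G$-morphism which restricts to a $KK^H$-equivalence on every finite subgroup $H \leq G$ induces an isomorphism on $K$-theory of reduced crossed products by $G$. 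Combining the two factors yields the claim.

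The main technical hurdle is that the $KK^G$-theoretic inputs require $A$ to be separable and $Z$ to be countable, neither of which is assumed in the statement. I would address this by a standard approximation: write $A$ as the directed union of its separable $G$-invariant sub-$C^*$-algebras and $Z$ as the directed union of its countable $G$-invariant sub-$G$-sets (possible because $G$ is countable, hence all $G$-orbits are countable), and use that $K$-theory, reduced crossed products by $G$, tensor products with $M_\mathfrak n$, and localization at $\mathfrak n$ all commute with filtered colimits. A brief naturality check then confirms that the resulting composite isomorphism is realized by the inclusion stated in the corollary.
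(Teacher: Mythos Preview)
Your proof is correct and essentially identical to the paper's. The only cosmetic differences are that the paper works uniformly with $M_{\mathfrak n^\infty}$ in the intermediate tensor factor (rather than splitting into the cases $\mathfrak n$ of infinite type versus $Z$ infinite), places the inductive-limit reduction to separable $A$ and countable $Z$ at the beginning, and cites \cite{CEO} alongside \cite{MN} for the Baum--Connes step.
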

\begin{proof}
By an inductive limit argument, we may assume that $Z$ is countable and $A$ is separable.
	If $G$ is finite, the statement follows from Theorem \ref{thm-kk-equivalence} considering the commutative diagram
	\begin{equation}\label{eq-diagram-KKG}
\xymatrix{
 A \ar[r] \ar[dr]_-{}&  A\otimes M_\mathfrak n^{\otimes Z}  \ar[r]^-{\phi_1} & A\otimes M_{{\mathfrak n}^\infty} \otimes M_\mathfrak n^{\otimes Z}   \\
 &  A\otimes M_{{\mathfrak n}^\infty} \ar[ur]_-{\phi_2} & 
}
	\end{equation}
	where $\phi_1, \phi_2$ are $KK^G$-equivalences.
	 Assume now that $G$ is infinite. Consider the diagram \eqref{eq-diagram-KKG}. We know that (the restrictions of) $\phi_1, \phi_2$ are $KK^H$-equivalences for every finite subgroup $H\subset G$. Since $G$ satisfies the Baum--Connes conjecture with coefficients, the results of \cite{CEO} (c.f. \cite{MN}) imply that $\phi_1$ and $\phi_2$ induce isomorphisms of the $K$-theory groups of reduced crossed products by $G$. The statement follows from this by identifying $K_\ast((A\otimes M_{{\mathfrak n}^\infty}) \rtimes_rG) \cong K_\ast(A \rtimes_rG)[1/ \mathfrak n]$.
\end{proof}
We end this section with an application to Bernoulli shifts on strongly self-absorbing $C^*$-algebras. Recall that a separable, unital $C^*$-algebra $\mathcal{D}\neq \C$ is strongly self-absorbing \cite{TW} if there is an isomorphism $\mathcal{D}\cong \mathcal{D}\otimes \mathcal{D}$ which is approximately unitarily equivalent to the first factor inclusion $\mathrm{id}_{\mathcal{D}}\otimes 1_{\mathcal{D}}\colon \mathcal{D}\to \mathcal{D}\otimes \mathcal{D}$. Strongly self-absorbing $C^*$-algebras are automatically simple, nuclear \cite{TW} and $\mathcal Z$-stable \cite{Winter}.
By the combination of \cite[Proposition 5.1]{TW} and the classification of unital, simple, separable, nuclear, $\mathcal Z$-stable $C^*$-algebras in the UCT class \cite{KirchbergPhillips,EGLN,TWW,CETWW,Carrion2023}, a complete list of strongly self-absorbing $C^*$-algebras satisfying the UCT is given by
\begin{equation} \label{eq-list}
\mathcal{Z}, \, M_\mathfrak n,\,  \mathcal{O}_\infty,\, \mathcal{O}_\infty\otimes M_\mathfrak n,\,  \mathcal{O}_2,
\end{equation}
where $\mathfrak n\neq1$ is a supernatural number of infinite type. 

The following corollary is a generalization of \cite[Corollary 6.9]{Szabo}.
We refer to \cite[Section 8]{MN} for a definition of \emph{having a $\gamma$-element equal to $1$} (where $X=\mathrm{pt}$ in our case). An equivalent way of phrasing this definition is that any element $f\in KK^G(A,B)$ which is a $KK^H$-equivalence for all finite subgroups $H\subset G$ is a $KK^G$-equivalence \cite[Theorem 8.3]{MN}.
By the Higson--Kasparov theorem \cite{HK} this assumption is satisfied for all a-T-menable groups. 
\begin{corollary} \label{cor-ssa}
Let $\mathcal{D}$ be a strongly self-absorbing $C^*$-algebra satisfying the UCT and let $G$ be a countable discrete group having a $\gamma$-element equal to $1$. Then, for any countable $G$-set $Z$, the $G$-$C^*$-algebra $\mathcal{D}^{\otimes Z}$ equipped with the Bernoulli shift is $KK^G$-equivalent to $\mathcal{D}$ equipped with the trivial $G$-action.
\end{corollary}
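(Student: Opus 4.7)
The plan is to reduce the corollary to Theorem~\ref{thm-kk-equivalence} (for the UHF case) and \cite[Corollary 6.9]{Szabo} (for the $\mathcal O_\infty$ case) by combining two tools: the classification \eqref{eq-list} of strongly self-absorbing $C^*$-algebras in the UCT class, and the Meyer--Nest detection principle \cite[Theorem 8.5]{MN} for groups with trivial $\gamma$-element.

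In analogy with the proof of Theorem~\ref{thm-kk-equivalence}, I would first produce the equivariant zigzag
\[ \mathcal{D}\xrightarrow{\iota}\mathcal{D}\otimes\mathcal{D}^{\otimes Z}\xleftarrow{\kappa}\mathcal{D}^{\otimes Z}, \]
with $\mathcal{D}$ on the left carrying the trivial action and $\mathcal{D}^{\otimes Z}$ on the right carrying the Bernoulli shift. The map $\kappa$ is a $KK^G$-equivalence as soon as $\mathcal{D}^{\otimes Z}$ absorbs the trivial action of $\mathcal{D}$ up to equivariant isomorphism; this is Theorem~\ref{thm-absorb} for $\mathcal{D}=M_\mathfrak n$, and the analogous equivariant absorption for the other members of \eqref{eq-list} follows from strong self-absorption via standard equivariant McDuff-type central sequence arguments. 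The remaining task is to prove that $\iota$ is a $KK^G$-equivalence.

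Since $G$ has $\gamma$-element equal to $1$, \cite[Theorem 8.5]{MN} reduces the invertibility of $\iota$ to that of its restrictions to every finite subgroup $H\le G$; as the restriction of a Bernoulli shift is again a Bernoulli shift, this reduces us to the case that $G$ is finite. For finite $G$, I would proceed case by case along \eqref{eq-list}. The case $\mathcal{D}=M_\mathfrak n$ is Theorem~\ref{thm-kk-equivalence}, and the case $\mathcal{D}=\mathcal O_\infty$ is \cite[Corollary 6.9]{Szabo}. The case $\mathcal{D}=\mathcal O_\infty\otimes M_\mathfrak n$ follows from these two via the equivariant isomorphism $(\mathcal O_\infty\otimes M_\mathfrak n)^{\otimes Z}\cong \mathcal O_\infty^{\otimes Z}\otimes M_\mathfrak n^{\otimes Z}$ combined with multiplicativity of the Kasparov product. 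The case $\mathcal{D}=\mathcal O_2$ is handled by observing that both $\mathcal O_2$ and $\mathcal O_2^{\otimes Z}$ are equivariantly $\mathcal O_2$-absorbing, so that any unital equivariant $*$-homomorphism between them yields a $KK^H$-equivalence by equivariant Kirchberg--Phillips absorption. Finally, the case $\mathcal{D}=\mathcal Z$ is reduced to the $M_\mathfrak n$ case by tensoring the zigzag with $M_\mathfrak n$ (for $\mathfrak n\ne 1$ of infinite type, with trivial action) and using $\mathcal Z\otimes M_\mathfrak n\cong M_\mathfrak n$; the statement for $\mathcal Z$ itself is then recovered by repeating the argument with a second supernatural number $\mathfrak m$ coprime to $\mathfrak n$ and assembling the two stabilizations via an equivariant $\mathcal Z$-stability (pullback) argument.

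The main obstacle I expect is the $\mathcal Z$ case: tensoring with a single UHF-algebra of infinite type loses $K$-theoretic information (specifically, any $\mathfrak n$-torsion), so recovering the $KK^G$-equivalence for $\mathcal Z$ from its $M_\mathfrak n$-stabilizations requires either coprime pair techniques or a direct equivariant $\mathcal Z$-stability argument. A secondary technical point is the verification that the equivariant absorption feeding into $\kappa$ extends uniformly to all members of \eqref{eq-list}; this should follow from McDuff-type central sequence theorems applied to the Bernoulli shift, but must be checked carefully for each non-UHF entry.
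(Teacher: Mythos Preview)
There are two divergences from the paper's proof, and the first is a genuine gap.

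Your treatment of $\kappa$ assumes that $\mathcal D^{\otimes Z}$ absorbs $\mathcal D$ $G$-equivariantly, invoking Theorem~\ref{thm-absorb} for the UHF case and ``McDuff-type'' arguments for the rest. But Theorem~\ref{thm-absorb} is proved only for \emph{finite} $G$, while the corollary covers all $G$ with $\gamma=1$, including non-amenable groups such as free groups. For those, Remark~\ref{rem-cocycle-conjugate} records that the Bernoulli shift on $M_{\mathfrak n}^{\otimes G}$ does \emph{not} absorb even $\mathcal Z$ up to cocycle conjugacy, so your absorption route to $\kappa$ fails outright there. The paper avoids this by treating $\iota$ and $\kappa$ on equal footing: both are reduced via \cite[Theorem~7.3]{MN} to finite subgroups $H\le G$, and then by the homotopy-colimit argument from the proof of Theorem~\ref{thm-kk-equivalence} to finite $H$-subsets $Y\subseteq Z$.

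At that stage your case-by-case analysis of the list~\eqref{eq-list} is workable but labours much harder than necessary, most visibly for $\mathcal Z$, where you reach for coprime-UHF/pullback tricks and flag this yourself as the main obstacle. The paper bypasses all five cases with a single observation: every entry of~\eqref{eq-list} is (non-equivariantly) $KK$-equivalent to one of $\C$, $0$, or $M_{\mathfrak n}$ --- indeed $\mathcal Z\simeq\mathcal O_\infty\simeq\C$, $\mathcal O_\infty\otimes M_{\mathfrak n}\simeq M_{\mathfrak n}$, and $\mathcal O_2\simeq 0$. Izumi's Theorem~\ref{thm-Izumi} then transports the Bernoulli-shift algebras $\mathcal D^{\otimes W}$ (for each finite $H$-set $W$ occurring in the zigzag) along these $KK$-equivalences, reducing both maps to the cases $\mathcal D=\C$ (trivial), $\mathcal D=0$ (trivial), and $\mathcal D=M_{\mathfrak n}$ (Theorem~\ref{thm-kk-equivalence}). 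No separate appeal to \cite[Corollary~6.9]{Szabo}, equivariant Kirchberg--Phillips absorption, or $\mathcal Z$-stability is required.
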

For the proof, we need the following result of Izumi \cite{Izumi} which we spell out here for later reference. 
\begin{theorem}[{\cite[Theorem 2.1]{Izumi}}, see also {\cite[Lemma 6.8]{Szabo}}]\label{thm-Izumi}
	Let $A,B$ be separable nuclear $C^*$-algebras, let $H$ be a finite group and let $Z$ be a finite $H$-set. Then, there is a map from $KK(A, B)$ to  $KK^H(A^{\otimes Z}, B^{\otimes Z})$ which in particular, sends the class of a $\ast$-homomorphism $\phi$ to the class of $\phi^{\otimes Z}$. Furthermore, this map is compatible with the compositions and in particular sends a $KK$-equivalence to a $KK^H$-equivalence. In particular, the Bernoulli shifts on $A^{\otimes Z}$ and $B^{\otimes Z}$ are $KK^H$-equivalent if $A$ and $B$ are $KK$-equivalent. 
\end{theorem}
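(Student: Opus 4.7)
The plan is to realize the map $KK(A,B) \to KK^H(A^{\otimes Z}, B^{\otimes Z})$ by taking $Z$-fold external tensor powers of Kasparov cycles. Given a cycle $(E, \phi, F)$ representing a class in $KK(A,B)$, I would form $(E^{\otimes Z}, \phi^{\otimes Z}, F^{(Z)})$, where $E^{\otimes Z}$ is the external Hilbert module tensor product of $|Z|$ copies of the $\Z/2$-graded module $E$ (naturally a $\Z/2$-graded Hilbert $B^{\otimes Z}$-module), $\phi^{\otimes Z}\colon A^{\otimes Z} \to \mathcal{L}(E^{\otimes Z})$ is the induced $*$-homomorphism, and
\[F^{(Z)} := \sum_{z \in Z} 1^{\otimes (z-1)} \otimes F \otimes 1^{\otimes (|Z|-z)}.\]
The permutation action of $H$ on the tensor factors of $E^{\otimes Z}$ (with Koszul signs) intertwines the Bernoulli shifts on $A^{\otimes Z}$ and $B^{\otimes Z}$, and the operator $F^{(Z)}$ is manifestly $H$-invariant because the defining sum is symmetric. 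A routine estimate using the compactness of $[F, \phi(a)]$ and of $\phi(a)(F^2-1)$ verifies the Kasparov axioms for the tensored cycle, so this produces a class in $KK^H(A^{\otimes Z}, B^{\otimes Z})$.

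Next I would verify well-definedness on $KK$-classes: operator homotopies and degenerate cycles in $KK(A,B)$ pass through the tensor power to yield equivariant operator homotopies and degenerate cycles. For compositionality, the key identity is $(x \cdot y)^{\otimes Z} = x^{\otimes Z} \cdot y^{\otimes Z}$ in $KK^H$, which follows from associativity and commutativity of the external Kasparov product together with the universal characterization of Kasparov products via connections; nuclearity of $A$ and $B$ ensures there is no ambiguity in the tensor products involved. The case of $*$-homomorphisms is transparent: the cycle $(B, \phi, 0)$ tensor-powers to $(B^{\otimes Z}, \phi^{\otimes Z}, 0)$, which by definition represents $[\phi^{\otimes Z}]$. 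Given compositionality, if $[\phi]$ is a $KK$-equivalence with inverse $\alpha$, then $[\phi^{\otimes Z}] \cdot \alpha^{\otimes Z} = (\phi \cdot \alpha)^{\otimes Z} = [\id_{A^{\otimes Z}}]$ and symmetrically, establishing the last two assertions about $KK^H$-equivalences and Bernoulli shifts.

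The main technical obstacle is compositionality, since the Kasparov product is only defined up to homotopy and its explicit computation is delicate. A convenient way around this is to work in Cuntz's quasi-homomorphism picture of $KK$, where composition is strict and the $Z$-fold tensor power functor is visibly compatible with composition. Alternatively one can argue inductively on $|Z|$: the case $|Z|=1$ is trivial, and the inductive step amounts to the compatibility of the external product $KK(A,B) \otimes KK(A', B') \to KK(A \otimes A', B \otimes B')$ with composition in both variables, which is part of Kasparov's foundational work and survives the passage to the symmetric group fixed points.
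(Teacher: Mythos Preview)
The paper does not supply its own proof of this theorem; it is quoted as a black box from Izumi's paper (with Szab\'o's lemma as an alternative reference). So there is no in-paper argument to compare against, only the cited sources.

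Your overall strategy---take $Z$-fold external tensor powers and observe $H$-equivariance under permutation of the factors---is the right idea, and your fallback to the Cuntz quasi-homomorphism picture is essentially what is done in the literature. However, the explicit bounded-picture operator you write down does not work as stated. With the graded tensor product one has
\[
\bigl(F^{(Z)}\bigr)^2=\sum_{z\in Z}1\otimes\dotsb\otimes F^2\otimes\dotsb\otimes 1,
\]
so even when $F^2=1$ exactly this equals $|Z|\cdot 1$, and $\phi^{\otimes Z}(a)\bigl((F^{(Z)})^2-1\bigr)$ is never compact for $|Z|>1$. Rescaling by $|Z|^{-1/2}$ fixes that condition but breaks the commutator condition: a typical summand of $[F^{(Z)},\phi^{\otimes Z}(a_1\otimes\dotsb\otimes a_n)]$ has the form $\phi(a_1)\otimes\dotsb\otimes[F,\phi(a_z)]\otimes\dotsb\otimes\phi(a_n)$, which lies in $\mathcal L(E)\otimes\dotsb\otimes\mathcal K(E)\otimes\dotsb\otimes\mathcal L(E)$ and is \emph{not} in $\mathcal K(E^{\otimes Z})=\mathcal K(E)^{\otimes Z}$ unless the remaining $\phi(a_w)$ are themselves compact. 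The ``routine estimate'' you allude to therefore does not go through; this is exactly why the external Kasparov product is not given by the naive sum of operators but needs Kasparov's technical theorem, a connection, or passage to the unbounded Baaj--Julg picture (where your formula \emph{is} correct before taking bounded transforms).

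Your final paragraph already contains the repair. In the Cuntz picture every class in $KK(A,B)$ is represented by a genuine $\ast$-homomorphism (into a suitable stabilisation), the $Z$-fold tensor power of such a map is an honest $H$-equivariant $\ast$-homomorphism, and composition is strict, so well-definedness, compositionality, and the $KK^H$-equivalence statement are immediate. Equivalently, one may simply iterate the already-constructed external Kasparov product $|Z|$ times and use its graded commutativity to obtain the $H$-equivariance; functoriality of the external product in each variable then gives compositionality without ever writing an explicit Fredholm operator. Either route is fine---just drop the bounded-cycle formula at the start, since it is where the argument actually breaks.
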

\begin{remark}\label{rem_Izumi} See \cite[Lemma 2.4]{CEKN}) or \cite[Theorem 2.17]{Bunke2023} for a generalization of Theorem \ref{thm-Izumi}. In particular, the nuclearity assumption is not necessary. 
\end{remark}
\begin{proof}[Proof of Corollary \ref{cor-ssa}]
	We claim that the unital embeddings
		\begin{equation}\label{eq-D-inclusions}
			\mathcal D\hookrightarrow \mathcal D\otimes \mathcal D^{\otimes Z}\hookleftarrow \mathcal D^{\otimes Z}
		\end{equation}
	are $KK^G$-equivalences. By the assumption on $G$, this amounts to showing that they are $KK^H$-equivalences for every finite subgroup $H\subset G$. By the same homotopy co-limit argument as in the proof of Theorem \ref{thm-kk-equivalence}, it is enough to show that the maps 
	\[
			\mathcal D\hookrightarrow \mathcal D\otimes \mathcal D^{\otimes Y}\hookleftarrow \mathcal D^{\otimes Y}
	\]
	are $KK^H$-equivalences for all finite $H$-subsets $Y$ of $Z$. 
	Now Theorem \ref{thm-Izumi} allows us to replace $\mathcal D$ by a $KK$-equivalent $C^*$-algebra. Thanks to the list \eqref{eq-list}, this reduces the problem to the cases $\mathcal D=\C$, $\mathcal D=0$ and $\mathcal D=M_\mathfrak n$. The first two cases are trivial and the third one follows from Theorem \ref{thm-kk-equivalence}.
\end{proof}

\section{Equivariantly $KK$-trivial flips}\label{sec-trivial} 
Recall that a $C^*$-algebra $A$ is said to have approximately inner flip if the flip automorphism $ A\otimes A \to A\otimes A, \,\,\, a\otimes b\mapsto b\otimes a$ is approximately inner, i.e. a point-norm limit of inner automorphisms. A $C^*$-algebra $A$ with approximately inner flip must be simple, nuclear and have at most one trace \cite{ER}.  An approximately inner flip necessarily induces the identity map on $K_\ast(A\otimes A)$ and this largely restricts the class of $C^*$-algebras $A$ with approximately inner flip. Effros and Rosenberg \cite{ER} showed that if $A$ is AF, then $A$ must be stably isomorphic to a UHF-algebra. Tikuisis \cite{Tikuisis} determined a complete list of classifiable $C^*$-algebras with approximately inner flip. We would like to thank Dominic Enders, Andr\'e Schemaitat and Aaron Tikuisis for informing us about a corrigendum stated below:
\begin{theorem} \label{thm-aif}(\cite[Theorem 1.3]{TikuisisCorrigendum}, Correction to \cite[Theorem 2.2]{Tikuisis})  Let $A$ be a separable, unital $C^*$-algebra with strict comparison, in the UCT class, which is either infinite or quasidiagonal. The following are equivalent:
\begin{enumerate}
\item\label{item-aif} $A$ has approximately inner flip;
\item\label{item-list} $A$ is Morita equivalent to one of the following $C^*$-algebras:
\begin{itemize}
\item $\mathbb{C}$;
\item $\mathcal{E}_{\mathfrak n, 1, \mathfrak m}$;
\item $\mathcal{E}_{\mathfrak n, 1, \mathfrak m}\otimes \mathcal{O}_\infty$;
\item $\mathcal F_{1, \mathfrak m}$.
\end{itemize}
\end{enumerate}
Here $\mathfrak m, \mathfrak n$ are supernatural numbers with $\mathfrak{m}$ of infinite type such that $\mathfrak m$ divides $\mathfrak n$, $\mathcal{O}_\infty$ is the Cuntz algebra on infinitely many generators, $\mathcal{E}_{\mathfrak n, 1,\mathfrak m}$ is the simple, separable, unital, $\mathcal{Z}$-stable, nuclear\footnote{In \cite{Tikuisis}, quasidiagonality was assumed as well but this is redundant by \cite{TWW}.} $C^*$-algebra in the UCT class with unique trace satisfying
\[
K_0(\mathcal{E}_{\mathfrak n, 1, \mathfrak m}) \cong \Q_\mathfrak n,\,\,  [1]_0=1, \,\, K_1(\mathcal{E}_{\mathfrak n, 1,\mathfrak  m}) \cong \Q_\mathfrak m/\Z,
\]
and $\mathcal F_{1,\mathfrak m}$ is the unique unital Kirchberg algebra in the UCT class satisfying 
\[
K_0(\mathcal F_{1,\mathfrak m})\cong 0,\,\,\, K_1(\mathcal F_{1,\mathfrak m})\cong \Q_\mathfrak m/\Z.\]
\end{theorem}
The following is part of what is proven in \cite{TikuisisCorrigendum}, \cite{Tikuisis}:
\begin{theorem}[\cite{TikuisisCorrigendum}, \cite{Tikuisis}] \label{thm_aif} Let $A$ be a separable $C^*$-algebra satisfying the UCT. Then, the following are equivalent:
\begin{enumerate}
\item\label{item-aif-1} The flip map $\sigma_{A, A}$ is equal to the identity in $KK(A\otimes A,A\otimes A)$;
\item\label{item-aif-2} As a graded abelian group, $K_0(A)\oplus K_1(A)$ is isomorphic to either $0\oplus \Q_\fm/\Z$ or $\Q_\fn\oplus \Q_\fm/\Z$ for supernatural numbers $\fm, \fn$ such that $\fm$ is of infinite type, and $\fm$ divides $\fn$; 
\item\label{item-aif-3} $A$ is $KK$-equivalent to either $\mathcal F_{1,\fm}$ or $M_{\fn} \oplus \mathcal{F}_{1, \fm}$ for supernatural numbers $\fm, \fn$ such that $\fm$ is of infinite type, and $\fm$ divides $\fn$; 
\item\label{item-aif-4} $A$ is $KK$-equivalent to a unital, simple, separable, nuclear $\mathcal Z$-stable $C^*$-algebra with approximately inner flip.
\end{enumerate}
\end{theorem}
\begin{proof}
	\eqref{item-aif-2} $\implies$ \eqref{item-aif-1}: This is \cite[Theorem 1.5]{TikuisisCorrigendum}. 
	
	\eqref{item-aif-1} $\implies$ \eqref{item-aif-2}: This follows from the proof of \cite[Theorem 1.7]{TikuisisCorrigendum} which formally assumes that $A$ has approximately inner flip, but only uses that $\sigma_{A,A}$ is equal to the identity in $KK(A\otimes A,A \otimes A)$. 
	
	\eqref{item-aif-2} $\Leftrightarrow$ \eqref{item-aif-3}: This follows from the UCT. 
	
	\eqref{item-aif-2} $\Leftrightarrow$ \eqref{item-aif-4}:  This follows from the UCT and Theorem \ref{thm-aif}. 
\end{proof}
We identify the subclass of $C^*$-algebras $A$ considered in Theorem \ref{thm_aif} for which the flip $C_2$-action on $A\otimes A$ is $C_2$-equivariantly $KK$-trivial. These are precisely the ($K$-theoretically) self-absorbing ones:
\begin{theorem} \label{thm_GKK} Let $A$ be a separable $C^*$-algebra satisfying the UCT. The following are equivalent:
	\begin{enumerate}
		\item\label{item-classif} $A$ is $KK$-equivalent to a unital, simple, separable, nuclear, $\mathcal Z$-stable $C^*$-algebra $A$ with approximately inner flip such that $A\otimes A\cong A$;
		\item\label{item-K-absorbing} The flip map $\sigma_{A,A}$ is equal to the identity in $KK(A\otimes A,A\otimes A)$ and we have an isomorphism $K_*(A)\cong K_*(A\otimes A)$; 
		\item\label{item-C2-trivial} The flip action on $A^{\otimes C_2}$ is $KK^{C_2}$-equivalent to the trivial action on $A\otimes A$;
		\item\label{item-G-trivial} For any finite group $G$, and for any finite $G$-set $Z$, $A^{\otimes Z}$ equipped with the Bernoulli shift $G$-action is $KK^G$-equivalent to $A$ equipped with the trivial $G$-action;
		\item\label{item-K-theory} As a graded abelian group, $K_0(A)\oplus K_1(A)$ is isomorphic to either $0\oplus \Q_\fm/\Z$ or $\Q_\fn\oplus \Q_\fm/\Z$ for supernatural numbers $\fm, \fn$ of infinite type such that $\fm$ divides $\fn$. 
	\end{enumerate}
	In particular, for any $G$ and $Z$ as in \eqref{item-G-trivial}, we have an isomorphism of $R_\C(G)$-modules
	\[K_*(A^{\otimes Z}\rtimes_r G)\cong K_*(A)\otimes_\Z R_\C(G),\]
	where $R_\C(G)\coloneqq K_0(C^*(G))\cong \Z[\hat G]$ denotes the complex representation ring. 
\end{theorem}
For the proof, we use the following definition:
\begin{definition}\label{def-ess}
	A supernatural number $\mathfrak n$ is said to be of \emph{essentially infinite type} if it can be decomposed as $\mathfrak{n} = n_0 \cdot \mathfrak{n}_1$, where $n_0$ is a natural number and $\mathfrak{n}_1$ is a supernatural number of infinite type.
\end{definition}
\begin{remark}\label{rem-ess}
Note that for $\mathfrak n=n_0 \cdot \mathfrak n_1$ as above, we have $\Q_\mathfrak n\cong \Q_{\mathfrak n_1}$ as abelian groups. 
\end{remark}
\begin{lemma}\label{lem-mult}
	Let $\fm,\fn$ be supernatural numbers. 
	Then the multiplication map $\Q_\mathfrak n \otimes_\Z \Q_\mathfrak m\to \Q_{\fn\fm}$ is an isomorphism. 
	Moreover, $\Q_{\fn}\cong \Q_{\fn^2}$ if and only if $\fn$ is of essentially infinite type. 
\end{lemma}
\begin{proof}
	The first statement follows by writing both sides as appropriate inductive limits of the multiplication map $\Z\otimes_\Z\Z\to \Z$. 
	For the second statement let $\mathfrak n=\prod_{i=1}^\infty p_i^{n_{p_i}}$ be a supernatural number such that $1\leq n_{p_i}<\infty$ for infinitely many distinct primes $p_i$ and write $q_i=p_i^{n_{p_i}}$. 
	Then any element in $\Q_\mathfrak {n^2}$ can be divided by $q_i^2$ for infinitely many $i$, which is not the case for  $\Q_\mathfrak {n}$. For the converse, assume that $\mathfrak n= n_0 \cdot \mathfrak n_1$ where $n_0$ is a natural number and $\mathfrak n_1$ is of infinite type. Then we have $\Q_{n_0}\cong \Z \cong \Q_{n_0^2}$ and $\fn_1=\fn_1^2$ and thus 
	\[\Q_{\fn} \cong  \Q_{n_0}\otimes_\Z \Q_{\fn_1} \cong \Q_{n_0^2}\otimes_{\Z}\Q_{\fn_1^2}\cong \Q_{\fn^2}.\]
\end{proof}
\begin{lemma}\label{lem-self-absorbing}
	Let $\fm,\fn$ be supernatural numbers such that $\fm$ is of infinite type and $\fm$ divides $\fn$. 
	\begin{enumerate}
		\item\label{item-selfabs1} We have $\mathcal F_{1,\fm}\cong \mathcal F_{1,\fm}\otimes \mathcal F_{1,\fm}$;
		\item\label{item-selfabs2} We have $K_*(\mathcal E_{\fn,1,\fm}\otimes \mathcal E_{\fn,1,\fm}) \cong \begin{cases}
			\Q_{\fn^2},&*=0\\
			\Q_{\fm}/\Z,&*=1
		\end{cases}$ with $[1_{\mathcal E_{\fn,1,\fm}\otimes \mathcal E_{\fn,1,\fm}}]_0=1\in \Q_{\fn^2}$;
		\item\label{item-selfabs3} We have $\mathcal E_{\fn,1,\fm}\cong \mathcal E_{\fn,1,\fm}\otimes \mathcal E_{\fn,1,\fm}$ if and only if $\mathfrak n$ is of infinite type.  
	\end{enumerate}
\end{lemma}
\begin{proof}
	This is already remarked in the paragraph after \cite[Proposition 7.3]{Tikuisis}. For the convenience of the reader, we give the proof below.
	
	\eqref{item-selfabs1} follows from the combination of the K\"unneth theorem \cite{Rosenberg1987} and the Kirchberg--Phillips classification theorem \cite{KirchbergPhillips}. 
	
	\eqref{item-selfabs2} follows from the K\"unneth theorem and Lemma \ref{lem-mult}.
	
	\eqref{item-selfabs3}: It easily follows from \eqref{item-selfabs2} that there is a unit-preserving isomorphism  
	${K_0(\mathcal E_{\fn,1,\fm}\otimes \mathcal E_{\fn,1,\fm})\cong K_0(\mathcal E_{\fn,1,\fm})}$
	if and only if $\fn$ is of infinite type. Indeed, the only unit preserving homomorphism $\mathbb{Q}_\fn \to \mathbb{Q}_{\fn^2}$ is the canonical inclusion $\mathbb{Q}_\fn \subset \mathbb{Q}_{\fn^2}$, and it is an isomorphism if and only if $\fn$ is of infinite type.
	Moreover, since $\mathcal E_{\fn,1,\fm}$ has a unique trace, the classification theorem \cite{EGLN,TWW,CETWW,Carrion2023} implies that such a unit-preserving isomorphism is necessarily induced by an isomorphism of the underlying $C^*$-algebras. 
\end{proof}
\begin{lemma}\label{lem_Kcompute} Let $A=M_{\fn}$ for a supernatural number $\mathfrak n$. Suppose that
\[
K_\ast\left(A^{\otimes C_2}\rtimes C_2\right) \cong K_\ast(A^{\otimes 2}\otimes C^*_r(C_2)).
\]
Then, $\mathfrak n$ is of essentially infinite type.
\end{lemma}
\begin{proof}
We prove the contrapositive. 
Suppose first that $\mathfrak n= \prod_{i=1}^{\infty} p_i^{n_{p_i}}$ where $1\leq n_{p_i}<\infty$ for infinitely many primes $p_i$ and write $q_i=p_i^{n_{p_i}}$. 
We first assume that $n_{p_i}<\infty$ for \emph{all} $i\in \N$. 
Then, $M_\mathfrak n^{\otimes C_2}\rtimes C_2$ is the inductive limit of the system
\[
\mathbb{C}\rtimes C_2 \xrightarrow{}  M_{q_1}^{\otimes C_2}\rtimes C_2  \xrightarrow{} (M_{q_1}\otimes M_{q_2})^{\otimes C_2}\rtimes C_2   \xrightarrow{}\dotsb
\]
From this (c.f. Proof of Theorem~\ref{thm-kk-equivalence}), we observe that  $K_0(M_\mathfrak n^{\otimes C_2}\rtimes C_2)$ is isomorphic to the inductive limit of the system
\[
R_{\mathbb{C}}(C_2)\xrightarrow{\cdot [\pi_{q_1}]} R_{\mathbb{C}}(C_2) \xrightarrow{\cdot [\pi_{q_2}]} R_{\mathbb{C}}(C_2) \xrightarrow{\cdot [\pi_{q_3}]}\dotsb
\]
where $\pi_k\colon C_2 \to  \GL\left(\ell^2\left(\{1,\dotsc,k\}^{C_2}\right)\right)$ is the permutation representation. We identify $R_{\mathbb{C}}(C_2) \cong  \mathbb{Z}^2$ using the trivial representation $[\sigma_0]$ and the sign representation $[\sigma_1]$ of $C_2$ as a basis of $R_{\mathbb{C}}(C_2)$. Since $[\pi_k] =  \frac{k(k+1)}{2} [\sigma_0] +  \frac{k(k-1)}{2} [\sigma_1]$ in $R_{\mathbb{C}}(C_2)$ (by the same arguments as in the proof of Proposition \ref{prop-KK-elements}), we see that the system is isomorphic to
\[
\mathbb{Z}^2 \xrightarrow{X_{q_1} \cdot } \mathbb{Z}^2  \xrightarrow{X_{q_2}  \cdot } \mathbb{Z}^2  \xrightarrow{X_{q_3}  \cdot } \dotsb
\]
where $X_k = \begin{bmatrix} \frac{k(k+1)}{2} & \frac{k(k-1)}{2} \\  \frac{k(k-1)}{2} & \frac{k(k+1)}{2} \end{bmatrix}$, which has eigenvectors $\begin{bmatrix} 1 \\ 1 \end{bmatrix}$,  $\begin{bmatrix} 1 \\ -1 \end{bmatrix}$ and the corresponding eigenvalues $k^2, k$. The system has a subsystem consiting of the span of $\begin{bmatrix} 1 \\ 1 \end{bmatrix}$ in each $\mathbb{Z}^2$ on which $X_{q_k}$ acts as $q_k^2$. The quotient system is isomorphic to
\[
\mathbb{Z}  \xrightarrow{q_1 \cdot } \mathbb{Z}  \xrightarrow{q_2  \cdot } \mathbb{Z}  \xrightarrow{q_3  \cdot } \dotsb.
\]
These induce the following short exact sequence 
\[
\xymatrix{
	0 \ar[r] & \Q_\mathfrak {n^2}  \ \ar[r] &  K_0(M_\mathfrak n^{\otimes C_2}\rtimes C_2) \ar[r] &   \Q_\mathfrak {n} \ar[r] &  0.
}
\]
By our assumption on $\fn$, the same reasoning as in the proof of Lemma \ref{lem-mult} shows that 
\[
K_0(M_\mathfrak n^{\otimes C_2}\rtimes C_2)\ncong   \Q_\mathfrak {n^2}\oplus  \Q_\mathfrak {n^2} \cong K_0(M_\mathfrak n^{\otimes 2}\otimes C^*_r(C_2)).
\]
These conclusions remain to hold for $\mathfrak{n}= \mathfrak{n}_0\cdot \mathfrak{n}_1$ where $\mathfrak{n}_0$ is a supernatural number of the type we just considered and $\mathfrak{n}_1$ is a supernatural number of infinite type which is coprime to $\mathfrak{n}_0$. This can be seen by either generalizing the argument above or by applying Theorem \ref{thm-kk-equivalence} and noting that $\Q_{\mathfrak n^2}\cong \Q_{\mathfrak n}$ implies $\Q_{\mathfrak n_0^2}\cong \Q_{\mathfrak n_0}$. 
\end{proof}

The following ideal filtration is a key technical ingredient for our proof of Theorem \ref{thm_GKK}. 
Special cases of this technique can be found implicitly in \cite{Izumi,CEKN,Bunke2023}. 
Since this is a recurring theme and might be of independent interest, we formlalize it as a general proposition here.
\begin{proposition}\label{prop_Izumi_filtration} Let $G$ be a locally compact group and, let $Z$ be a finite $G$-set. Let
\begin{equation}\label{eq_Izumi_seq}
\xymatrix{
0 \ar[r] & J \ar[r]^{j} &  B  \ar[r]^{\pi} & B/J \ar[r] &  0
}
\end{equation}
be a short exact sequence of $C^*$-algebras which admits a contractive, completely positive (c.c.p.) splitting. Define $I_0\coloneqq B^{\otimes Z}$ equipped with the Benoulli shift $G$-action. Let 
\[
0=I_{|Z|+1} \subset I_{|Z|} \subset \ldots  \subset I_{j+1} \subset I_{j} \subset \ldots \subset I_0
\]
be a $G$-equivariant filtration of $I_0$, where $I_j$ is the $G$-invariant ideal of $B^{\otimes Z}$ generated by elementary tensors
\[
\otimes_{z\in Z} b_z
\]
where at least $j$-many of $b_z$ belong to $J$. In particular, we have $I_{|Z|}=J^{\otimes Z}$. By convention, $I_{|Z|+1}\coloneqq 0$. Then, for any $0\leq j \leq |Z|$, we have a canonical $G$-equivariant isomorphism
\begin{equation}\label{eq-Izumi-graded}
I_{j}/I_{j+1} \cong \bigoplus_{F \subset Z, |F|=j} J^{\otimes F} \otimes (B/J)^{\otimes Z-F},
\end{equation}
where the right-hand side is endowed with the canonically induced $G$-aciton: $g\in G$ maps $J^{\otimes F} \otimes (B/J)^{\otimes Z-F}$ to $J^{\otimes g(F)} \otimes (B/J)^{\otimes Z-g(F)}$ by permuting the tensor factors. Moreover, the sequence
\[
0 \to I_{j+1} \to I_j \to I_{j}/I_{j+1} \to 0
\]
admits a $G$-equivariant c.p. splitting. If $B$ is separable, the sequence admits a $G$-equivariant c.c.p. splitting. 

\end{proposition}
\begin{proof}  Note that all the involved actions of $G$ factor through the symmetric group on $Z$, which is finite. 
Hence, we will assume that $G$ is finite throughout (this assumption will be relevant only in the last part). 

We first prove \eqref{eq-Izumi-graded} for $j=0$ by establishing that $\ker(\pi^{\otimes Z})=I_1$.
The inclusion $\ker(\pi^{\otimes Z})\supset I_1$ is trivial. 
For the reverse inclusion, let $s\colon B/J \to B$ be a c.c.p. map that splits $\pi\colon B\to B/J$. 
Since $\left(\prod_{i \in F}(1-x_i)\right)_{F\subset Z}$ is a basis for the subspace of affine multilinear polynomials in $\bZ[x_i \mid i\in Z]$, we have
\begin{equation}\label{eq-polynomialproduct}
1- \prod_{i \in Z}x_i = \sum_{F \subset Z} \left( a_F \prod_{i \in F}(1-x_i) \right)
\end{equation}
for unique $a_F\in \bZ$ for $F\subset Z$ and $a_\emptyset=0$ (to see this, substitute $x_i=1$ in \eqref{eq-polynomialproduct}). It follows that we have
\begin{equation*}\label{eq-1minusZ}
\mathrm{id}_{B^{\otimes Z}} - (s\circ \pi)^{\otimes Z}  = \sum_{\emptyset \neq F \subset Z} a_F (\mathrm{id}_B -s \circ \pi)^{\otimes F} \otimes  \mathrm{id}_{B^{\otimes Z-F}}   
\end{equation*}
on $B^{\otimes Z}$. 
From this, it is easy to see that $\mathrm{im}(\mathrm{id}_{B^{\otimes Z}} - (s\circ \pi)^{\otimes Z})\subset I_1$. 
Moreover, since the $G$-equivariant c.c.p. map $s^{\otimes Z}\colon (B/J)^{\otimes Z} \to B^{\otimes Z}$ splits $\pi^{\otimes Z}\colon B^{\otimes Z} \to (B/J)^{\otimes Z}$, we have $\ker(\pi^{\otimes Z})=\mathrm{im}(\mathrm{id}_{B^{\otimes Z}} - (s\circ \pi)^{\otimes Z})$.

Thus, we have a canonical $G$-equivariant isomorphism
\[
I_0/I_1 = B^{\otimes Z}/\ker(\pi^{\otimes Z})\cong (B/J)^{\otimes Z}.
\]
To prove \eqref{eq-Izumi-graded} for $0 < j < |Z|$, note that the subalgebras $J^{\otimes F_i}\otimes B^{\otimes Z-F_i} \subset I_j$ for distinct $F_i\subset Z$ with $|F_i|=j$ are pairwise orthogonal modulo $I_{j+1}$. 
Since these subalgebras generate $I_j$, it follows that the quotient $I_j/I_{j+1}$ is the direct sum of the quotient of $J^{\otimes F}\otimes B^{\otimes Z-F}$ by $(J^{\otimes F}\otimes B^{\otimes Z-F})\cap I_{j+1}$ for $F\subset Z$ with $|F|=j$. 
The proof of \eqref{eq-Izumi-graded} for $0 < j < |Z|$ will thus follow from the case $j=1$ proved above once we show that 
\begin{equation}\label{eq-prod-ideal}
\left( J^{\otimes F}\otimes B^{\otimes Z-F} \right) \cap I_{j+1} = J^{\otimes F}  \otimes  I_{1, Z-F},
 \end{equation}
where $I_{1, Z-F}\subset B^{\otimes Z-F}$ is the ideal generated by elementary tensors $\otimes_{z\in Z-F}b_z$ with $b_z\in J$ for at least one $z\in Z-F$. 
The inclusion $\supset$ in \eqref{eq-prod-ideal} is obvious. 
The reverse inclusion follows from
 \[
 \left( J^{\otimes F}\otimes B^{\otimes Z-F} \right) \cap I_{j+1}  =  \left( J^{\otimes F}\otimes B^{\otimes Z-F} \right) \cdot  I_{j+1} \subset  J^{\otimes F}  \otimes  I_{1, Z-F},
 \]
which can be verified on the generators of $I_{j+1}$. 
This proves \eqref{eq-prod-ideal} and implies that we have a canonical $G$-equivariant isomorphism
 \[
 I_{j}/I_{j+1} \cong \bigoplus_{F \subset Z, |F|=j} J^{\otimes F} \otimes (B/J)^{\otimes Z-F}.
 \]
This finishes the proof of \eqref{eq-Izumi-graded} since the case $j=|Z|$ holds by definition. 
 
 We show that the quotient map $I_{j} \to I_{j}/I_{j+1}$ admits a $G$-equivariant c.p.c. splitting if $B$ is separable and a $G$-equivariant c.p. splitting in general.
 First, note that the sum of c.c.p. maps
 \[
 \mathrm{id}_{J^ {\otimes F}}\otimes s^{\otimes Z-F} \colon J^{\otimes F} \otimes (B/J)^{\otimes Z-F} \to  J^{\otimes F} \otimes B^{\otimes Z-F}  \to I_j,
 \]
 over $F\subset Z$ with $|F|=j$, is a $G$-equivariant c.p. splitting of  $I_j  \to  I_{j}/I_{j+1}$. 
 Now suppose $B$ is separable. Then we recall that any c.p. splitting can be modified to a (not necessarily $G$-equivariant) c.c.p. splitting (see \cite[Remark 2.5]{CS86} and also \cite{Arveson}) for separable $C^*$-algebras.
 Finally, since $G$ was assumed to be finite without loss of generality, by averaging over $G$, any not necessarily $G$-equivariant c.c.p. splitting can be promoted to a $G$-equivariant c.c.p. splitting.
\end{proof}

\begin{definition}\label{def_Izumi_filtration} We call the $G$-equivariant filtration of $B^{\otimes Z}$ by the ideals $I_j$ in Proposition \ref{prop_Izumi_filtration}, the \emph{Izumi filtration} of $B^{\otimes Z}$ associated with the short exact sequence \eqref{eq_Izumi_seq}.
\end{definition}

\begin{proof}[Proof of Theorem \ref{thm_GKK}] 
\eqref{item-K-absorbing} $\implies$ \eqref{item-classif}: 
By Theorems \ref{thm-aif} and \ref{thm_aif}, and Lemmas \ref{lem-mult} and \ref{lem-self-absorbing}, $A$ is $KK$-e\-qui\-va\-lent to either $\mathcal F_{1,\fm}$ or $\mathcal E_{\mathfrak n,1,\mathfrak m}$ for supernatural numbers $\fm,\fn$ where $\fm$ is of infinite type, $\fn$ is of essentially infinite type and $\fm$ divides $\fn$. By Remark \ref{rem-ess} we can assume that $\fn$ is of infinite type. Now the claim follows from Lemma \ref{lem-self-absorbing}.

\eqref{item-classif} $\implies$ \eqref{item-K-theory}: 
By Theorem \ref{thm-aif}, $K_0(A)\oplus K_1(A)$ is isomorphic to either ${0\oplus\Q_\fm/\Z}$ or ${\Q_\fn\oplus \Q_\fm/\Z}$ for supernatural numbers $\fm,\fn$ such that $\fm$ is of infinite type and $\fm$ divides $\fn$.
In the latter case, the K\"unneth theorem implies $\Q_\fn \cong K_0(A)\cong K_0(A\otimes A)\cong \Q_{\fn^2}$, so that $\fn$ must be of essentially infinite type by Lemma \ref{lem-mult}. In view of Remark \ref{rem-ess}, we can take $\fn$ to be of infinite type.

\eqref{item-G-trivial} $\implies$ \eqref{item-C2-trivial}: This follows by considering $C_2$-sets $Z=C_2$ with the free $C_2$-action and $Z=\{0, 1\}$ with the trivial $C_2$-action.

\eqref{item-K-theory} $\implies$ \eqref{item-G-trivial}: 
By the UCT, $A$ is $KK$-equivalent to either ${M_\fn\oplus \mathcal F_{1,\fm}}$ or $\mathcal F_{1,\fm}$ for supernatural numbers $\fm,\fn$ of infinite type such that $\fm$ divides $\fn$. We first consider the case when $A$ is $KK$-equivalent to $\mathcal{F}_{1, \fm}$. Let
\begin{equation}\label{eq_JBseq}
\xymatrix{
0 \ar[r] & J \ar[r]^{j} &  B  \ar[r]^{\pi} & B/J \ar[r] &  0
}
\end{equation}
be a short exact sequence of separable $C^*$-algebras satisfying the UCT such that the quotient map $\pi\colon B \to B/J$ is $KK$-equivalent to the unital inclusion $\iota\colon \mathbb{C} \to M_\fm$. Assume that the sequence admits a c.c.p. splitting. For example, we can take $B$ to be the mapping cylinder of $\iota$, $\pi\colon B\to B/J=M_\fm$ to be the canonical quotient map, and $J$ to be the kernel of $\pi$. Then, it follows from the six-term exact sequence that $K_0(J)\cong0$ and $K_1(J)\cong \bQ_\fm /\bZ$. By the UCT, $J$ is $KK$-equivalent to $A$. By Theorem \ref{thm-Izumi} (see also Remark \ref{rem_Izumi}), it suffices to show that $J^{\otimes Z}$ is $KK^G$-equivalent to $J$ equipped with the trivial action.

Define $I_0\coloneqq B^{\otimes Z}$ equipped with the Benoulli shift $G$-action. Let
\[
0=I_{|Z|+1} \subset I_{|Z|} \subset \ldots  \subset I_{j+1} \subset I_{j} \subset \ldots \subset I_0
\]
be the Izumi filtration (see Definition \ref{def_Izumi_filtration}) of $B^{\otimes Z}$ associated with \eqref{eq_JBseq}. In particular, we have $I_{|Z|}=J^{\otimes Z}$. By Proposition \ref{prop_Izumi_filtration}, for any $0\leq j \leq |Z|$, we have a canonical isomorphism
\[
I_{j}/I_{j+1} \cong \bigoplus_{F \subset Z, |F|=j} J^{\otimes F} \otimes (B/J)^{\otimes Z-F}.
\]
In particular, we have $I_0/I_1 \cong (B/J)^{\otimes Z}$. By Proposition \ref{prop_Izumi_filtration}, the sequences $0\to I_{j+1}\to I_j \to I_j/I_{j+1} \to 0$ admit G-equivariant c.c.p. splittings. Hence, these are all admissible extensions in $KK^G$ (see \cite[Section 2.3]{MN}) and induce triangles (a.k.a. fiber sequences) in $KK^G$. 

We claim that for any $0\leq j \leq |Z|-1$, the natural inclusion map
\begin{equation}\label{eq_filt_KKiso}
J \otimes I_{j+1} \to J\otimes I_j
\end{equation}
is a $KK^G$-equivalence, or equivalently, that $J \otimes (I_{j}/I_{j+1})$ is $KK^G$-equivalent to zero. Here, $J$ is endowed with the trivial $G$-action. To see this, we first note that $(I_{j}/I_{j+1})$ is the direct sum of the induced algebras of the form
\[
\mathrm{Ind}_{G_F}^G (J^{\otimes F} \otimes (B/J)^{\otimes Z-F})
\]
for $F\subset Z$ with $|F|=j$ where $G_F\subset G$ is the stablizer of $F$ (the elements that fix $F$ as a subset, not necessarily pointwise). Secondly, since $B/J$ is $KK$-equivalent to $M_\fm$, $(B/J)^{\otimes Z-F}$ is $KK^{G_F}$-equivalent to $M_\fm$ equipped with the trivial action by Theorem \ref{thm-kk-equivalence}. Since $J\otimes M_\fm$ is $KK$-equivalent to zero, it follows $J \otimes (J^{\otimes F} \otimes (B/J)^{\otimes Z-F})$ is $KK^{G_F}$-equivalent to zero. By Frobenius reciprocity, we get a $KK^G$-equivalence  
\[J\otimes \mathrm{Ind}_{G_F}^G (J^{\otimes F} \otimes (B/J)^{\otimes Z-F})\simeq_{KK^G} \mathrm{Ind}_{G_F}^G(J\otimes  J^{\otimes F} \otimes (B/J)^{\otimes Z-F})\simeq_{KK^G}0.\]  

We have shown that the maps \eqref{eq_filt_KKiso} are $KK^G$-equivalences for $0\leq j \leq |Z|-1$. Therefore, their composition 
\[
J \otimes I_{|Z|} \to J\otimes I_0 
\]
is a $KK^G$-equivalence. Since $I_0=B^{\otimes Z}$ is $KK^G$-equivalent to $\mathbb{C}^{\otimes Z} \cong \mathbb{C}$, this gives a $KK^G$-equivalence from $J \otimes J^{\otimes Z}$ to $J$. 

We now show that $J \otimes J^{\otimes Z}$ is $KK^G$-equivalent to $J^{\otimes Z}$. We take the tensor product of the sequence \eqref{eq_JBseq} with $J^{\otimes Z}$:
\begin{equation}\label{eq_JBseqZ}
\xymatrix{
0 \ar[r] & J\otimes J^{\otimes Z} \ar[r]^{j \otimes \mathrm{id}} &  B \otimes J^{\otimes Z}  \ar[r]^{\pi \otimes \mathrm{id}} & B/J \otimes J^{\otimes Z} \ar[r] &  0.
}
\end{equation}
By Theorem \ref{thm-kk-equivalence}, $(B/J)^{\otimes Z}$ is $KK^G$-equivalent to $B/J$. We thus have $KK^G$-equivalences
\[
 B/J \otimes J^{\otimes Z} \simeq_{KK^G}  (B/J)^{\otimes Z} \otimes J^{\otimes Z}  \simeq_{KK^G} ((B/J) \otimes J)^{\otimes Z} \simeq_{KK^G} 0
\] 
in $KK^G$ where the last equivalence follows by Theorem \ref{thm-Izumi} combined with the $KK$-equivalence $(B/J) \otimes J \simeq_{KK} M_\fm \otimes J \simeq_{KK} 0$. It follows that ${j \otimes \mathrm{id}_{J^{\otimes Z}}}$ induces a $KK^G$-equivalence 
\[
 J\otimes J^{\otimes Z}  \simeq_{KK^G}  B \otimes J^{\otimes Z}   \simeq_{KK^G}  J^{\otimes Z}.
\]
Combining this with the $KK^G$-equivalence $J \otimes J^{\otimes Z} \simeq_{KK^G} J$, we see that $J^{\otimes Z}$ is $KK^G$-equivalent to $J$.

We have just proved the implication assuming $A$ is $KK$-equivalent to $\mathcal F_{1,\fm}$. Now suppose $A$ is $KK$-equivalent to ${M_\fn\oplus \mathcal F_{1,\fm}}$ for supernatural numbers $\fm,\fn$ of infinite type such that $\fm$ divides $\fn$. Then, we have
\[
M_\mathfrak n^{\otimes Y} \otimes \mathcal{F}_{1, \mathfrak m}^{\otimes Z}   \simeq_{KK^G}  M_\mathfrak n \otimes \mathcal{F}_{1, \mathfrak m}   \simeq_{KK^G}  0,
\]
for any finite group $G$, and for any finite $G$-sets $Y$, $Z$ by the previous part and Theorem \ref{thm-kk-equivalence}. By binomial expansion, the Benoulli-shift on $(M_\fn \oplus \mathcal{F}_{1, \fm} )^{\otimes Z}$ is isomorphic to
\[
\bigoplus_{[S] \in \mathrm{Sub}(Z)/G} \left(  \bigoplus_{F \in [S]}  M_\mathfrak n^{\otimes F} \otimes \mathcal{F}_{1, \mathfrak m}^{\otimes Z-F}  \right) \cong \bigoplus_{[S] \in \mathrm{Sub}(Z)/G}  \mathrm{Ind}^G_{G_S}( M_\mathfrak n^{\otimes S} \otimes \mathcal{F}_{1, \mathfrak m}^{\otimes Z-S}), 
\]
where $\mathrm{Sub}(Z)$ is the set of subsets of $Z$, equipped with the natural $G$-action induced from the $G$-action on $Z$. Each summand is $KK^G$-equivalent to zero unless $S= \emptyset$ or $Z$. It follows that the Bernoulli-shifts on $(M_\mathfrak n \oplus \mathcal F_{1,\mathfrak \mathfrak m})^{\otimes Z}$ is $KK^G$-equivalent $M_\mathfrak n^{\otimes Z}\oplus \mathcal F_{1,\mathfrak \mathfrak m}^{\otimes Z}$, which is $KK^G$-equivalent to ${M_\fn\oplus \mathcal F_{1,\fm}}$ by the previous part and Theorem \ref{thm-kk-equivalence}.

\eqref{item-C2-trivial} $\implies$ \eqref{item-K-absorbing}: 
If $A$ satisfies \eqref{item-C2-trivial}, the flip automorphism $\sigma_{A, A}$, as an element in $KK(A\otimes A, A\otimes A)$, is equal to the identity element $\mathrm{id}_{A\otimes A}$\footnote{To see this note that for any discrete group $G$, the universal property of $KK^G$ induces a functor $KK^G\to \mathrm{Fun}(BG,KK)$ where $BG$ is the category with one object and $G$ as automorphisms.}.
It remains to be shown that $K_*(A)\cong K_*(A\otimes A)$. 
By Theorem \ref{thm_aif}, $A$ is $KK$-equivalent to either $\mathcal F_{1,\fm}$ or $M_{\fn} \oplus \mathcal{F}_{1, \fm}$ for supernatural numbers $\fm, \fn$ such that $\fm$ is of infinite type, and $\fm$ divides $\fn$. 
By the K\"unneth theorem, it is enough to consider the latter case and show that $\fn$ is of essentially infinite type. 
As in the proof of \eqref{item-K-theory} $\implies$ \eqref{item-G-trivial}, we have a $KK^{C_2}$-equivalence ${(M_\fn\oplus \mathcal F_{1,\fm})^{\otimes C_2}}\simeq_{KK^{C_2}} {M_\fn ^{\otimes C_2}\oplus \mathcal F_{1,\fm}^{\otimes C_2}}$. 
Moreover, by the proof of \eqref{item-K-theory} $\implies$ \eqref{item-G-trivial}, $\mathcal F_{1,\fm}^{\otimes C_2}$ is $KK^{C_2}$-equivalent to $\mathcal F_{1,\fm}$ equipped with the trivial action. In particular, we have $K_0(\mathcal F_{1,\fm}^{\otimes C_2}\rtimes C_2)\cong K_0(\mathcal F_{1,\fm}\otimes C^*( C_2))=0$ and thus 
\[ K_0(M_\fn ^{\otimes C_2}\rtimes C_2)\cong K_0(A^{\otimes C_2}\rtimes C_2)\cong K_0(A^{\otimes 2}\otimes C^*(C_2))\cong K_0(M_\fn^{\otimes 2}\otimes C^*( C_2)).\]
It follows from Lemma \ref{lem_Kcompute} that $\fn$ must be of essentially infinite type. 
\end{proof}
The following corollary is a simple consequence of Theorem \ref{thm_GKK} and \cite[Corollary 6.4 (ii)]{Gabe2024}.
\begin{corollary}
	Let $G$ be a finite group, let $Y,Z$ be finite $G$-sets and let $\fm$ be a supernatural number of infinite type. Assume that each non-trivial element in $G$ acts non-trivially on $Y$ and $Z$.
	Then there is an equivariant isomorphism $\mathcal F_{1,\fm}^{\otimes Y} \cong \mathcal O_\infty^{\otimes Z}\otimes \mathcal F_{1,\fm}$ where $\mathcal F_{1,\fm}^{\otimes Y}$ and $\mathcal O_\infty^{\otimes Z}$ are equipped with the Bernoulli shift $G$-actions and $\mathcal F_{1,\fm}$ is equipped with the trivial $G$-action. 
\end{corollary}


\bibliography{Refs}
\bibliographystyle{alphaurl}

\end{document}